\DeclareMathAlphabet{\pazocal}{OMS}{zplm}{m}{n}
\newcommand{\N}{\ensuremath{\mathbb{N}}}
\newcommand{\Q}{\ensuremath{\mathbb{Q}}}
\newcommand{\R}{\ensuremath{\mathbb{R}}}
\newcommand{\al}{\alpha}
\newcommand{\ga}{\gamma}
\newcommand{\de}{\delta}
\newcommand{\Ga}{\Gamma}
\newcommand{\la}{\lambda}
\newcommand{\La}{\Lambda}
\newcommand{\Om}{\Omega}
\numberwithin{equation}{section}
\theoremstyle{plain}
\newtheorem{theorem}{Theorem}[section]
\newtheorem{lemma}[theorem]{Lemma}
\newtheorem{corollary}[theorem]{Corollary}
\theoremstyle{definition}
\newtheorem{definition}[theorem]{Definition}
\theoremstyle{remark}
\newtheorem{fact}[theorem]{Fact}
\newtheorem{remark}[theorem]{Remark}
\theoremstyle{notation}
\newtheorem{acknowledgment}[theorem]{Acknowledgment}
\newtheorem{case[theorem]}{Case}
\DeclareRobustCommand\widecheck[1]{{\mathpalette\@widecheck{#1}}}
\def\@widecheck#1#2{%
    \setbox\z@\hbox{\m@th$#1#2$}%
    \setbox\tw@\hbox{\m@th$#1%
       \widehat{%
          \vrule\@width\z@\@height\ht\z@
          \vrule\@height\z@\@width\wd\z@}$}%
    \dp\tw@-\ht\z@
    \@tempdima\ht\z@ \advance\@tempdima2\ht\tw@ \divide\@tempdima\thr@@
    \setbox\tw@\hbox{%
       \raise\@tempdima\hbox{\scalebox{1}[-1]{\lower\@tempdima\box
\tw@}}}%
    {\ooalign{\box\tw@ \cr \box\z@}}}
\definecolor{blue}{rgb}{0,0,1}
\definecolor{red}{rgb}{1,0,.2}
\newcommand{\alert}{}
\newcommand{\ybox}{}
\author{K\'aroly Simon}
\address[K\'aroly Simon]{Budapest University of Technology and Economics, Department of Stochastics, Institute of Mathematics} \email{simonk@math.bme.hu}
\author{Krystal Taylor }
\address[Krystal Taylor]{Department of Mathematics, The Ohio State, Columbus, OH}
\email{taylor.2952@osu.edu}
\subjclass[2010]{Primary 28A75 Secondary 28A80, 42B10}
\keywords{Arithmetic sum of sets, Hausdorff dimension, Cantor sets, nonlinear projections, distance sets.}
\thanks{
}
\title{\parbox{14cm}{\centering{ Dimension and Measure of Sums of Planar sets and Curves  }}}
\begin{document}

\setstcolor{red}

\begin{abstract}
Considerable attention has been given to the study of the arithmetic sum
of two planar sets. We focus on understanding the measure and dimension of  $A+\Gamma:=\left\{a+v:a\in A, v\in \Gamma \right\}$ when  $A\subset \mathbb{R}^2$ and $\Gamma$ is a piecewise $\mathcal{C}^2$ curve.  
Assuming $\Ga$ has non-vanishing curvature,  we verify that
\begin{description}
  \item[(a)] if $\dim_{\rm H} A \leq 1$, then $\dim_{\rm H} (A+\Gamma)=\dim_{\rm H} A +1$;
  \item[(b)] if $\dim_{\rm H} A>1$, then $\pazocal{L}_2(A+\Gamma)>0$;
  \item[(c)] if $\dim_{\rm H} A=1$ and $\mathcal{H}^1(A) < \infty$, then $\pazocal{L}_2(A+\Gamma)=0$ if and only if $A$ is an irregular (purely unrectifiable) $1$-set.
\end{description}
%Items (a) and (b) follow from previous works of Wolff and Oberlin using Fourier analysis.  
In this article, we develop an approach using nonlinear projection theory which gives new proofs of (a) and (b) and the first proof of (c).  
Item (c) has a number of consequences:
 if a circle is thrown randomly on the plane, it will almost surely not intersect the 
 four corner Cantor set. Moreover, the pinned distance set of an irregular $1$-set has $1$-dimensional Lebesgue measure equal to zero at almost every pin $t\in \mathbb{R}^2$. 
\end{abstract}
%May 9, 2022: changed will have to has in previous sentence 
\maketitle

\setcounter{tocdepth}{1}
\tableofcontents

\section{Introduction}
\subsection{The theme of the paper}
Given a  set $A\subset \mathbb{R}^2$, we study the set of points which are at a distance $1$ from at least one of the elements of $A$, where ``distance" refers to Euclidean distance or some other natural distance on the plane.
This set is $A+S^1$, where $S^1$ is the unit circle in the given distance.
More generally, we consider 
$$A+\Ga:= \{ a + g : a \in A, g \in \Ga\},$$
where $\Ga$ denotes a piecewise $\mathcal{C}^2$ curve, 
%In fact, we consider piecewise $\mathcal{C}^2$ curves $\Gamma$ in addition to $S^1$, 
and we investigate
the following questions: under which conditions is the
\begin{description}
  \item[(Q1)] 2-dimensional Lebesgue measure of $A+\Gamma$ is positive?
  \item[(Q2)] Hausdorff dimension of $A+\Gamma$ the maximum of $1+\dim_{\rm H} A$ and $2$?
\end{description}

While these questions have been partially resolved using Fourier analytic methods, see Section \ref{S01}, the critical situation when $\dim_{\rm{H}}(A) =1$ is left open. We introduce a new proof technique that encompasses this case.  
%
%That is, this alternate approach appears to go beyond what is available using Fourier analytic methods and allows us to handle the critical case when $\dim_{\rm H}(A)=1$.
%
The novelty in our approach is that we introduce a $1$-parameter family of Lipschitz maps $\left\{\Phi_\alpha\right\}_{\alpha\in I}$, where $I$ is an interval, $\Phi_\alpha:A\to \ell _\alpha$, and $\ell _\alpha$ is the vertical line at $x=\alpha$. This family $\left\{\Phi_\alpha\right\}$ is defined in such a way that
\begin{itemize}
  \item 
$A+\Ga$ can be expressed as the union $\cup_{a\in I} \Phi_\alpha(A)$.
  (In particular, $\cup_{a\in I} \Phi_\alpha(A)\subset A+\Gamma$ and the reverse containment holds with a proper extension of the mapping $\Phi_\al$). 
  \item $\left\{\Phi_\alpha\right\}_{\alpha\in I}$ satisfies the so-called transversality condition. That is, the intrinsic nature of the family $\left\{\Phi_\alpha\right\}_{\alpha\in I}$ is similar to that of the family of orthogonal projections on the plane. This makes it possible to invoke a number of results and methods from fractal geometry when we study questions \textbf{(Q1)} and \textbf{(Q2)}.
\end{itemize}

This work is continued in \cite{STint}, where we investigate the existence of an interior point in sum sets of the form $A+\Ga$.
It is also continued in \cite{BT21, CDT21, DT22}, where more quantitative estimates are obtained, and in \cite{McT} where pinned distance sets and paths are investigated in the more specific setting of Cantor sets.

%ORGANIZATION
The behavior of $A+\Gamma$ is conspicuously different when the piecewise-$\mathcal{C}^2$ curve $\Gamma$ has non-vanishing curvature versus the behavior when $\Gamma$ is a polygon. 
Our main results are presented in Sections \ref{results1} and \ref{results2}, respectively, based on these two cases.  
Relevant notation and definitions are collected in Section \ref{notation}.   
Before stating our main results, we consider some related results in the literature.

%LITERATURE/ STORY/ HISTORY
\subsection{History and  motivation}\label{S01}
Let $S(a,r)$ denote the circle in the plane with center $a$ and radius $r$, and identify the set of all such circles with $\mathcal{S}=\mathbb{R}^2\times (0,\infty)$.  Given a collection of center-radii pairs $E\subset \mathcal{S}$
 with dimension greater than $1$, it is reasonable to hypothesize that since a given circle has dimension $1$, then the union over circles in $E$ has dimension $2$.

In 1987, Marstrand  \cite{Mar87}  proved that if $A\subset\mathbb{R}^2$ is a set of positive Lebesgue measure, and if $r:\mathbb{R}^2 \rightarrow (0,\infty )$, then the set $$ \bigcup_{a \in A} S(a,r(a)) $$ cannot have zero Lebesgue measure.
This preliminary result holds in higher dimensions as a consequence of the Stein spherical
maximal theorem \cite{S76} established by Stein for $d \geq 3$ and by Bourgain \cite{Bo86}.
 Bourgain actually proved a stronger result that extends to sets with non-vanishing curvature in the case when the set of centers has positive measure.  
 See also joint work of the second listed author in \cite{IKST}, in which $L_p$ bounds are established for the Stein spherical maximal operator in the setting of compactly supported Borel measures; we note that this connection was an initial catalysts for the article at hand.

A considerable strengthening of Marstrand's result was proved by Wolff \cite[Corollary 3]{W00}, and a higher dimensional analog was provided by Oberlin \cite{O06}.  
In these works, it is proved that if 
%  \begin{theorem}[Wolff $d=2$, Oberlin $d\geq3$]\label{N44'}
%Given 
$E\subset \mathbb{R}^d\times \mathbb{R}^+$ satisfies $\ybox{\dim_{\rm H} E>1}$, 
 then
   % \vspace{-0.3cm}
    \begin{equation*}\label{O99}
   \pazocal{L}_d \left( \bigcup_{(a,\rho)\in E} S(a,\rho) \right)\ne 0, 
    \end{equation*}
%  \end{theorem}
where $\pazocal{L}_d$ denotes the $d$-dimensional Lebesgue measure.  

 Setting $\rho \equiv 1$, we obtain the following corollary:
 % \begin{corollary}\label{O96}
  Let $A$ be a compact subset of  $\,\,\mathbb{R}^d$ for some $d\geq 2$.
  If $\dim_{\rm H} A>1$, then
   \begin{equation*}\label{O97}
    \alert{   \pazocal{L}_d \left(   A+S^1\right)>0}.
   \end{equation*}
% \end{corollary}

 In Theorem \eqref{main_measdim} (a), we show that $\dim_{\rm H} (A)>1$ suffices to conclude that the \textit{Lebesgue measure} of $A+\Gamma $ is positive, where $\Gamma $ is an arbitrary $C^2$ curve with at least one point of non-zero curvature.  
 The necessity of the hypotheses on $\Gamma$ are considered in Theorem \eqref{084} (a').

 When $\dim_{\rm H} A \leq 1$, it is straightforward to verify that the dimension of $A+S^1$ is at most $\dim_{\rm H} A + 1$ (see \cite[Corollary 7.4]{Falc90}).
Wolff proved the reverse inequality \cite[Corollary 5.4]{W97}.
In particular, he showed that if 
%\begin{theorem}[Wolff 1997]\label{N44}
  %If 
  $A\subset \mathbb{R}^2$ is a Borel set with $\dim_{\rm H} A \leq 1$ then
  \begin{equation}\label{O95}
    \dim_{\rm H} (A+S^1) \geq 1+\dim_{\rm H} A.
  \end{equation}
%  \end{theorem}
Also see Oberlin \cite[(3.1)]{O08}, in which $S^1$ is replaced by a Salem set, as well as \cite{HT21}, where the Hausdorff and Fourier dimensions of more general sum and product sets are investigated.   

In Theorem \ref{main_measdim} (b), we show that $S^1$ can be replaced by a piecewise $C^2$ curve (see Definition \ref{085}) with at least one point of non-vanishing curvature.  
We provide both the known Fourier analytic proof of this fact, as well as a new proof using projection theory. 
In Theorem \ref{084} (b'), we prove that non-vanishing curvature at a point is a necessary condition.

%NOTATION
\subsection{Notation}\label{notation}
Before we state our main results, we collect some notation and basic definitions.  
  \begin{enumerate}
    \item 
    Let $A \subset \mathbb{R}^d$, then $A^\circ$ denotes the interior, 
$\dim_{\rm H}(A)$ is the Hausdorff dimension, $\mathcal{H}^1(A)$ is the $1$-dimensional Hausdorff measure or ``length'' of $A$, and $\pazocal{L}_d(A)$ denotes the $d$-dimensional Lebesgue measure of $A$. 
\item 
Given $A,B\subset \mathbb{R}^2$, the Minkowski sum is given by
$$A+B=\{ a+b: a\in A, b\in B\}.$$
\item $S^1$ is the unit circle in the Euclidean distance, and $rS^1= \{ r \omega: \omega \in S^1\}$ for $r>0$. 
\item $\mathrm{proj}_\theta: \R^2 \rightarrow \R$ denotes the orthogonal projection, $\mathrm{proj}_\theta(x) = x\cdot e_\theta,$ where $e_\theta $ is the unit vector in direction $\theta$. 
\item $N_\theta$ denotes the angle $\theta$ rotation of the perimeter of $[0,1]^2$.
  \end{enumerate}

Cantor sets will play a role in examples and applications. 
\begin{definition}[Cantor set]\label{Cantor_defn}
A \textit{Cantor set} is a bounded totally disconnected perfect set (a closed set which is identical to its accumulation points).
For $\gamma\in(0,1)$, the \textit{symmetric Cantor set} $C_\gamma\subset [0,1]$  is defined as follows:
We iterate the same process that yields the usual middle-third Cantor set with the difference that we remove the middle-$1-2\gamma$ portion of every interval throughout the construction.  In this way, 
$$
  C_\gamma=\left\{(1-\gamma)\sum\limits_{k=1}^{\infty }
  a_k\gamma^{k-1}:a_k\in\left\{0,1\right\}
  \right\}.
$$
So, the middle $d$ Cantor set is $C_{\frac{1-d}{2}}$.
\end{definition}

\begin{definition}[Product Cantor set]\label{Cantor_defn}
For $\gamma\in(0,1)$, define the \textit{product Cantor set} $C(\gamma)\subset\mathbb{R}^2$ by
$$
      C(\gamma):=C_\gamma\times C_\gamma.
$$
Note, the Hausdorff dimension of $C(\gamma)$ is $(2\log 2)/(\log \gamma^{-1})$ (see \cite{Mat95} p. 60, 115). 
In particular, $C(1/4)$ is called the \emph{four corner Cantor set} and $\dim_{\rm H} C(1/4) = 1$.  
\end{definition}

%PIECEWISE ASSUMPTION:
Unless otherwise specified, the curves in this paper are assumed to be piecewise $\mathcal{C}^2$:
\begin{definition}[Piecewise $\mathcal{C}^2$ curve]\label{085}
  We say that a finite curve $\Gamma$ is \textit{piecewise} $\mathcal{C}^2$ if there is a parametrization $\underline{\gamma}(t):=(\gamma_1(t),\gamma_2(t))$, $t\in\left[0,T\right]$ and a partition
  $0=a_0<a_1< \cdots < a_n=T$ such that $\underline{\gamma}(t)$ is a  $\mathcal{C}^2$ curve
  on $t_i\in\left(a_{i-1},a_i\right)$
   for every $i=1, \dots ,n$ and $t\mapsto \underline{\gamma}(t)$, $t\in\left[0,L\right]$ is continuous.

   A piecewise   $\mathcal{C}^2$  curve is \textit{piecewise linear} if the curvature is zero for every $t\in(a_{i-1}-a_i)$, for every $i=1, \dots ,n$.
\end{definition}

\subsection*{Acknowledgements.}
This work came out of a collaboration initiated at ICERM at Brown University in Rhode Island and was then further supported through MRI at the Ohio State University. Simon is partially supported by the grant OTKA 123782,
by MTA-BME Stochastics Research Group.
Taylor is supported in part by the Simons Foundation Grant 523555.

\section{Main results when $\Gamma$ has non-vanishing curvature}\label{results1}

We collect two results in the next theorem, both of which follow from established Fourier analytic methods. 

\begin{theorem}[Measure and Dimension of $A+\Ga$]\label{main_measdim}
Let $\Gamma$ be a simple piecewise $\mathcal{C}^2$ curve with non-vanishing curvature 
and positive and finite length. 
Then for every compact set $A\subset \mathbb{R}^2$, it holds that 

  \begin{description}
    \item[(a)] if $\dim_{\rm H} A>1$, then  $A+\Gamma$ is a set of positive Lebesgue measure;
    \item[(b)] if $\dim_{\rm H} A \leq 1$, then $\dim_{\rm H} (A+\Gamma)=1+\dim_{\rm H} A$.
  \end{description}
\end{theorem}

We provide a new proof of Theorem \ref{main_measdim}
using the transversality method from fractal geometry.  The benefit of this technique is that it can be used to handle the critical case when ${\rm dim}_H(A)= 1$, as is done in Theorem  \ref{main_critical}.  For the sake of comparison, and as they are not very long, we also include Fourier analytic proofs for each part of Theorem \ref{main_measdim}. 
The proof based on nonlinear projection theory is given in Sections \ref{r88}- \ref{main_measdim_sec}.  Section \ref{checking_section} is dedicated to checking the transversality condition.  The Fourier analytic proofs are given in Section \ref{section_fourier}.

If $A \subset \mathbb{R}^2$ with $\dim_{\rm H} A=1$ then
Theorem \ref{main_measdim} says that $\dim_{\rm H} (A+S^1)=2$. A natural question, which we now turn to investigating, is under which condition can we decide if $\pazocal{L}_2(A+S^1)>0$? 
The answer depends on whether $A$ is regular or irregular.  

\begin{definition}[Rectifiable and purely unrectifiable sets]
We say that a set $A \subset \mathbb{R}^2$ is \textit{regular} or \textit{rectifiable} if $\mathcal{H}^1$-almost all of its points can be covered by a countable union of rectifiable curves. On the other hand, $A$ is \textit{irregular} or \textit{purely unrectifiable} if $\mathcal{H}^1(\gamma\cap A)=0$ for every rectifiable curve $\gamma$. 
\end{definition}
The four corner Cantor set, for instance, is a classic example on an irregular set with positive and finite length.

\begin{theorem}[Characterizing measure of $A+\Ga$ in the critical setting when $\dim_{\rm H}(A) = 1$]\label{main_critical}
Let $\Gamma \subset \mathbb{R}^2$ be a simple piecewise $\mathcal{C}^2$ curve with 
positive and finite length such that the curvature of $\,\Ga$ does not vanish except at a set of points having zero $\mathcal{H}^1$-measure.
Let $A\subset \mathbb{R}^2$ denote a compact set with $\dim_{\rm H} A=1$. Further we assume that $\mathcal{H}^1|_A$ is $\sigma$-finite.
Then
$\pazocal{L}_2(A+\Gamma)=0$ if and only if for every
rectifiable curve $\gamma$, we have $\mathcal{H}^1(\gamma\cap A)=0$.
\end{theorem}

The proof of Theorem \ref{main_critical} is given in Section \ref{g88}.
An immediate consequence is the following.
\begin{corollary}[Measure of $A+\Ga$ in the critical setting when $\dim_{\rm H}(A) = 1$]\label{g97}
  Let $A \subset \mathbb{R}^2$ be a compact $1$-set ($\mathcal{H}^1$-measurable and $0<\mathcal{H}^1(A)<\infty $). Then
  \begin{description}
    \item[(a)] $\pazocal{L}_2(A+S^1)=0$ whenever $A$ is irregular;
        \item[(b)] $\pazocal{L}_2(A+S^1)>0$ whenever $A$ is regular.
  \end{description}
\end{corollary}

\begin{remark}
  In the special case when $A$ is the attractor of a self-similar iterated function system which contains a map with irrational rotation in its linear part, part (a) follows from a result of A. Farkas \cite{Far}.
\end{remark}

Another immediate corollary of Theorem \ref{main_critical} is:

\begin{corollary}[Probabilistic consequence for Buffon circle problem]\label{probability}
Let $A \subset \mathbb{R}^2$  be a compact irregular $1$-set.
   Assume that we throw randomly  a unit circle on the plane such that the distribution of the center is absolutely continuous w.r.t. the two dimensional Lebesgue measure $\pazocal{L}_2$. Then this random circle does not intersect $A$ almost surely.
\end{corollary}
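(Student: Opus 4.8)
The plan is to reduce this probabilistic statement to the deterministic measure-zero conclusion of Corollary~\ref{g97}(a) via a single geometric reformulation. Write $X$ for the (random) center of the thrown circle, so that the circle is $S(X,1)=X+S^1$. First I would observe that this circle meets $E$ precisely when some $e\in E$ satisfies $|e-X|=1$, i.e. $e-X\in S^1$; since $S^1$ is centrally symmetric we have $-S^1=S^1$, so this is equivalent to $X\in e+S^1$ for some $e\in E$. Hence
\[
  \{\,S(X,1)\cap E\ne\emptyset\,\}=\{\,X\in E+S^1\,\},
\]
so that the event ``the random circle hits $E$'' coincides exactly with the event ``the center lands in $E+S^1$.''

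Next I would invoke the hypotheses together with the already-established results. Because $E$ is an irregular $1$-set, Corollary~\ref{g97}(a) gives $\mathcal{L}^2(E+S^1)=0$. Let $\mu$ denote the distribution of the center $X$. By assumption $\mu\ll\mathcal{L}^2$, so every $\mathcal{L}^2$-null set is also $\mu$-null; applying this to the null set $E+S^1$ yields
\[
  \mathbb{P}\bigl(S(X,1)\cap E\ne\emptyset\bigr)=\mu\bigl(E+S^1\bigr)=0.
\]
Thus the random circle misses $E$ with probability one, which is the assertion.

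I do not anticipate any genuine obstacle: the statement is an immediate consequence once the dictionary between ``the circle hits $E$'' and ``the center lies in $E+S^1$'' is in place, and the only point needing a moment's care is the symmetry $-S^1=S^1$ that makes this dictionary clean. (For an asymmetric curve $\Gamma$ one would instead land in $E+(-\Gamma)$, but for the circle this distinction disappears, and in any case $-\Gamma$ is again an irregular $1$-set whenever $\Gamma$ is, so the argument is robust.)
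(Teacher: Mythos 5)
Your proposal is correct and follows essentially the same route as the paper: the paper likewise identifies the event that $S(x,1)$ meets $E$ with the event $x\in E+S^1$ and then applies Theorem \ref{063} (equivalently Corollary \ref{g97}(a)) together with absolute continuity of the center's distribution. Your remark on the symmetry $-S^1=S^1$ is a small extra care the paper leaves implicit, but it does not change the argument.
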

\begin{proof}[Proof of the Corollary]
  The random circle $S(x,1)$ intersects $A$ if and only if $x\in A+S^1$. But this happens with zero probability from Theorem \ref{main_measdim}.
\end{proof}

We note that a deeper investigation of the result of Corollary \ref{probability} is followed up on in \cite{BT21, CDT21}, where we obtain upper and lower bounds on the rate of decay as $n\rightarrow \infty$ of the probability that a unit circle intersects the $n$-th generation in the construction of the four corner Cantor set, $C(1/4)$.  Further, in \cite{BT21, DT22}, we obtain similar quantitative information for more general irregular $1$-sets.  

It is interesting to note that, while $F+S^1$ has positive measure when $F$ is a regular $1$-set, $F+S^1$ need not have an interior point.  
\begin{remark}\label{g91} There exists a regular $1$-set $F \subset \mathbb{R}^2$ such that $\left(F+S^1\right)^{\circ}=\emptyset$.
\end{remark}

\begin{proof}[Proof of Remark \ref{g91}]
 Let $F_0:=[0,1]\setminus\mathbb{Q}$, $\gamma_{1}(x):=\sqrt{1-x^2}$, and $\gamma_2(x):=-\sqrt{1-x^2}$.
 Setting $F = F_0\times \{0\}$, 
 it is immediate that, for $i=1,2$, 
 $$
\left( F + S^1 \right)\cap \left(\mathbb{Q}\times \gamma_i(\mathbb{\widetilde{\Q}})\right)= \emptyset,
$$
where $\widetilde{Q}= \Q\cap [-1,1]$. 
On the other hand, $\bigcup_{i=1,2}\left(\mathbb{Q}\times \gamma_i(\mathbb{Q})\right) $ is dense in the strip $\R \times [-1,1]$. 
 \end{proof}

\subsection{Pinned distance sets}
In the following corollary of Theorem \ref{main_measdim}, we consider the pinned distance sets
\begin{equation}\label{pinned_defn}  \Delta_{b}(A)=
  \left\{ \|a-b\|_2 : a\in A
  \right\}\end{equation} of a set $A \subset \mathbb{R}^2$ pinned at $b\in\mathbb{R}^2$.

The celebrated Falconer's distance conjecture is a famous problem in geometric measure theory, which concerns the relationship between the dimension of a subset $E$ of $\R^d$ and its distance set defined by 
$\Delta(A) := \{\|x-y\|: x,y\in A.\}$.  
In the plane, the conjecture states that if 
$\dim_{\rm{H}}(A)>1$, then $\pazocal{L}_1(\Delta(A))>0$.  
Recent exciting progress on the study of pinned distance sets includes work by 
Guth, Iosevich, Ou, and Wang \cite{GIOW}, T. Orponen, \cite{O12}, and P. Shmerkin, \cite{S17}.
 In \cite{GIOW}, it is shown that if $A\subset \R^2$ with $\dim_{\rm{H}}>\frac{5}{4}$, then there exists a $b\in A$ so that $\Delta_b(A) $ has positive Lebesgue measure.  
Orponen proves that if $A\subset\mathbb{R}^2$ is a self-similar set of $\dim_{\rm{H}}(A)>1$ with at least one irrational rotation in its construction, then $\dim_{\rm{H}}(\Delta_x(A))=1$ for some $x$.
Shmerkin proves that if $A\subset\mathbb{R}^2$ so that $\dim_{\rm{H}}(A)=\dim_{p}(A)> 1$, where $\dim_{p}$ denotes the packing dimension of $A$, then $\dim_{\rm{H}}(\Delta_x(A))=1$ for all $x$ outside of a set of dimension at most $1$.  
For more results on pinned distance sets, see for instance Peres and Schlag's \cite{PeSc00}, work by the second listed author in \cite{IMT12, ITU}, as well as our companion paper \cite[Section 2.1.2]{STint}. For analogous work on finite point configurations, see \cite{GIT19, GIT21} and the references therein. 

The following is a result toward understanding the Lebesgue measure at the critical dimension in the plane $\dim_{\rm{H}}(A) =1.$

\begin{corollary}[The pinned distance set of an irregular $1$-set has zero measure a.a.]\label{y99}
  Let $A \subset \mathbb{R}^2$ be a compact irregular $1$-set. Then
  \begin{equation}\label{Y98}
     \pazocal{L}_1\left(\Delta_{t}(A)\right)=0\qquad \mbox{for $\pazocal{L}_2$- a.a. $t\in\mathbb{R}^2$.}
  \end{equation}
 \end{corollary}
\begin{proof}
  This is immediate from Theorem \ref{main_critical}  and from the Fubini theorem. More precisely,
  fix an arbitrary $r>0$, and  observe that
  $$A+rS^1= \left\{y\in\mathbb{R}^2: A\cap S(y,r)\ne \emptyset\right\}.$$
 If we apply Theorem \ref{main_critical} for $(1/r) \cdot A$ instead of $A$, then we obtain that
 \begin{equation}\label{y97}
 \pazocal{L}_2\left\{y\in\mathbb{R}^2: A\cap S(y,r)\ne \emptyset\right\}=0, \qquad \forall r>0.
 \end{equation}

  Let
  $$
  \Theta:=\left\{(y,r)\in\mathbb{R}^2\times(0,\infty ):
  A\cap S(y,r)\ne \emptyset
  \right\}
  $$
 Then $\Theta^c \subset \mathbb{R}^3$ is open (since we assumed that $A$ is compact) so, $\Theta$ is closed.  By
 \eqref{y97} and the Fubini Theorem, we know that $\pazocal{L}_3\left(\Theta\right)=0$. Hence, also by the Fubini Theorem, we obtain that
 \begin{equation}\label{y96}
   \pazocal{L}_1 \left\{r>0:
   A\cap S(y,r)\ne \emptyset\right\} =0,\mbox{ for $\pazocal{L}_2$-a.a. $y\in\mathbb{R}^2$.}
 \end{equation}
  Clearly, $A\cap S(y,r)\ne \emptyset$ holds if and only if $r\in \Delta_{y}(A)$.
\end{proof}

\begin{remark}\label{y95}
  We remark that in our companion paper ,\cite{STint}, we investigate the interior of the pinned distance set of Cantor-sets of the form  $C(\gamma)$. Summarizing
  these results, Corollary \ref{y99}, and Peres-Schlag's result \cite[Corollary 8.4]{PeSc00}, we have:
  \begin{enumerate}
    \item If $\gamma=1/4$ then $\pazocal{L}_1\left(\Delta_{t}(C(1/4))\right)=0$ for $\pazocal{L}_2$- a.a. $t\in\mathbb{R}^2$ (Corollary \ref{y99}.)
    \item If $\gamma>\frac{1}{4}$ then  $\pazocal{L}_1\left(\Delta_{t}(C(\gamma))\right)>0$
    holds for almost all $t$ in the stronger sense that
     for every straight line $\ell  \subset \mathbb{R}^2$,
     for $\mathcal{H}^1$-almost all $t\in\ell $, $\pazocal{L}_1\left(\Delta_{t}(C(\gamma))\right)>0$.
     (\cite[Corollary 8.4]{PeSc00})
    \item If $\gamma \geq \frac{1}{3}$ then $\left(\Delta_{t}(C(\gamma))\right)^{\circ}\ne \emptyset $ for all $t\in \R^2$.
    (\cite[Corollary 2.12]{STint} and \cite[Theorem 2.15]{STint} ).
  \end{enumerate}
\end{remark}
It remains an open question which is the smallest $\gamma$ such that $\left(\Delta_{t}(C(\gamma))\right)^{\circ}\ne \emptyset $ for most of the $t$?

\subsection{Applications to product Cantor sets.}
Let $C(\ga)$ denote a product Cantor set as in  Definition \ref{Cantor_defn}.  
 \begin{remark}        
 What we know about the set $C(\gamma)+S^1$  is as follows:
\begin{enumerate}
  \item if $\gamma < \frac{1}{4}$ then $\dim_{\rm H} \left(C(\gamma)+S^1\right)=1+\frac{\log 4}{\log \gamma^{-1}}$. (See e.g. Theorem \ref{main_measdim}).
  \item if $\gamma=\frac{1}{4}$ then $\dim_{\rm H} \left(C(\frac{1}{4})+S^1\right)=2$ but $\pazocal{L}_2\left(C(\gamma)+S^1\right)=0$.
      (See Corollary \ref{g97}).
  \item if $\gamma>\frac{1}{4}$ then $\pazocal{L}_2\left(C(\gamma)+S^1\right)>0$. (See Theorem \ref{main_measdim}).
  \item if $\gamma \geq \frac{1}{3}$ then
  $\left(C(\gamma)+S^1\right)^{\circ}\ne \emptyset $ (see our companion paper
  \cite[Theorem 2.7]{STint}
   ).
   \item Further, $ \pazocal{L}_1\left(\Delta_{t}(C(\frac{1}{4}) )\right)=0 \,  \mbox{ for $\pazocal{L}_2$- a.a. $t\in\mathbb{R}^2$.}$ (See Corollary \ref{y99}).
\end{enumerate}
We do not know if there are  $\gamma\in\left(\frac{1}{4},\frac{1}{3}\right)$ with
$\left(C(\gamma)+S^1\right)^{\circ}\ne \emptyset $.
Also, regarding (5), it would be very interesting to obtain an upper bound on the Hausdorff dimension of the set of exceptional  $t$.  
\end{remark}

  \section{Main results when $\Gamma$ is a polygon}\label{results2}
In this section we always assume that $\Gamma$ is a piecewise linear curve.
First, we establish some connections between the size of $A+\Gamma$ and
$\mathrm{proj}_{\theta}(A)$,
the orthogonal projection in the plane to the angle$-\theta$ line.
Then we use Theorems of Kenyon \cite{Ken97} and Hochman \cite{Hoch14} on the projection properties of $A$ in various special cases to obtain the size of $A+\Gamma$.
We begin with a simple Lemma.  

\begin{lemma}[Measure and dimension of $A+\Ga$ when $\Ga$ is a union of straight lines]\label{g94}
Let $A \subset \mathbb{R}^2$ be Lebesgue measurable.
Let  $\Gamma=\bigcup_{i=1}^{n}I_i$, where $I_i$ are straight line segments and we write
$\alpha_i$ for the angle (with the positive part of the $x$-axis) of $I_i$. Then we have:
\begin{description}
  \item[(a)] if $\,\exists i$ $\pazocal{L}_1\left(\mathrm{proj}_{\alpha_i^{\perp}}(A)\right)>0$, then $\pazocal{L}_2\left(A+\Gamma\right)>0$;
  \item[(b)] if $\,\forall  i$ $\pazocal{L}_1\left(\mathrm{proj}_{\alpha_i^{\perp}}(A)\right)=0$, then $\pazocal{L}_2\left(A+\Gamma\right)=0$;
 \item[(c)] if $\,\exists i$ $\left(\mathrm{proj}_{\alpha_i^{\perp}}(A)\right)^{\circ}\ne\emptyset$, then $\left(A+\Gamma\right)^{\circ}\ne\emptyset$;
 \item[(d)] $\dim_{\rm H} (A+\Gamma) =1+\max\limits_{i=1, \dots n}\left(\dim_{\rm H} \left(\mathrm{proj}_{\alpha_i^{\perp}}(A)\right)\right)$,
\end{description}
%where $\mathrm{proj}_{\alpha_i^{\perp}}$ is the orthogonal projection to the line making an angle $\alpha_i+\frac{\pi}{2}$ %with the positive $x$-axis.
where $\mathrm{proj}_{\alpha_i^{\perp}}(A) = \{ a \cdot e_i: a\in A\} \subset \R$ and $e_i$ denotes the unit vector in direction 
$\alpha_i+\frac{\pi}{2}$.
\end{lemma}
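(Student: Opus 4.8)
The plan is to peel off the segments one at a time. Since $\Gamma=\bigcup_{i=1}^n I_i$ we have $A+\Gamma=\bigcup_{i=1}^n(A+I_i)$, a finite union; so (a) and (c) follow as soon as a single $A+I_{i}$ is large, (b) follows once every $A+I_i$ is null, and (d) follows because the Hausdorff dimension of a finite union is the maximum of the dimensions. Thus I fix one index $i$ and rotate the plane by $-\alpha_i$, which preserves $\mathcal{L}^2$, $\dim_{\rm H}$ and interiors and turns $I_i$ into a horizontal segment $[0,L]\times\{0\}$ and $\mathrm{proj}_{\alpha_i^{\perp}}$ into the orthogonal projection $\pi(x,y)=y$ onto the second axis. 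Writing $A$ for the rotated set, $P:=\pi(A)$, and $F_y:=\{x:(x,y)\in A\}$ for the fiber, the object to analyze is $A+I_i=\{(x+t,y):(x,y)\in A,\ t\in[0,L]\}$, whose projection is again $P$ and whose horizontal slice at height $y$ is $F_y+[0,L]$.

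For (a) and (b) the arguments are soft. If $\mathcal{L}^1(P)>0$, then for every $y\in P$ the slice $F_y+[0,L]$ contains an interval of length $L$, so Fubini in the rotated coordinates yields $\mathcal{L}^2(A+I_i)\ge L\,\mathcal{L}^1(P)>0$, proving (a). If instead $\mathcal{L}^1(P)=0$ (which under the hypothesis of (b) holds for every $i$), then $A+I_i\subseteq\pi^{-1}(P)=\mathbb{R}\times P$, every vertical section of which is the null set $P$; hence $\mathcal{L}^2(\mathbb{R}\times P)=0$ by Tonelli and $\mathcal{L}^2(A+\Gamma)\le\sum_i\mathcal{L}^2(A+I_i)=0$, proving (b).

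For (c) and (d) I must transfer information from the projection $P$ back to $A+I_i$, and here some regularity of $A$ (I take $A$ compact) enters. For (c), suppose $J\subseteq P$ is an open interval and set $m(y):=\inf F_y$ on $J$; for closed $A$ this infimum is attained and $m$ is lower semicontinuous, hence continuous at some $y_0\in J$. Choosing $\delta$ with $|m(y)-m(y_0)|<L/4$ whenever $|y-y_0|<\delta$, each such slice contains $[m(y),m(y)+L]\supseteq[m(y_0)+L/4,\,m(y_0)+3L/4]$, so the open box $(y_0-\delta,y_0+\delta)\times(m(y_0)+L/4,\,m(y_0)+3L/4)$ lies in $A+I_i$, giving (c). For (d), the upper bound $\dim_{\rm H}(A+I_i)\le 1+\dim_{\rm H}P$ follows from $A+I_i\subseteq[-M,M]\times P$ together with the standard product formula $\dim_{\rm H}(\text{interval}\times P)=1+\dim_{\rm H}P$ (see \cite{Falc90}), extended to unbounded $A$ by a countable exhaustion. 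For the lower bound I fix $s<\dim_{\rm H}P$, take a Frostman measure $\mu$ on $P$ with $\mu(B(y,r))\le Cr^{s}$ and a measurable selection $y\mapsto x(y)\in F_y$, and define a measure $\nu$ on $A+I_i$ by spreading the normalized length measure of $[x(y),x(y)+L]\times\{y\}$ against $\mu$. A disc $B(p,r)$ meets only heights within $r$ of $p$, of total $\mu$-mass $\le Cr^{s}$, and cuts each such fiber in length $\le 2r$, so $\nu(B(p,r))\le(2C/L)\,r^{1+s}$; the mass distribution principle (see \cite{Mat15}) then gives $\dim_{\rm H}(A+I_i)\ge 1+s$, and letting $s\uparrow\dim_{\rm H}P$ completes (d).

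I expect the real work to be concentrated in the lower bounds for (c) and (d), not in (a) or (b). The difficulty is that the chosen fiber point $x(y)$ may depend on $y$ in a completely arbitrary (merely measurable) way, so $A+I_i$ need not contain any product set $\{\text{interval}\}\times P$, and one cannot simply invoke $\dim_{\rm H}(I\times P)=1+\dim_{\rm H}P$ on a subset. The Frostman-measure computation above is precisely what absorbs this wandering of the fibers, and the lower semicontinuity of $m$ together with the Baire category input is the analogous device that tames it for the open-set conclusion (c). Consequently these two parts are the ones that genuinely use that $A$ is closed (analytic, for the selection), whereas the measure statements (a) and (b) hold for arbitrary $A$ at the level of inner and outer measure.
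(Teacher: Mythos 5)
The paper never actually writes out a proof of Lemma \ref{g94}: it is stated before Theorem \ref{084}, and Section \ref{P98} only proves Theorems \ref{084} and \ref{g80} \emph{using} the lemma, so there is nothing in the source to compare you against line by line. Your argument is correct and complete, and it follows the only natural route: decompose $A+\Gamma=\bigcup_i(A+I_i)$, reduce to a single segment by a rotation, and then read off (a), (b) by Fubini/Tonelli on the fibration over $\mathrm{proj}_{\alpha_i^{\perp}}$, (c) by finding a continuity point of the lower-semicontinuous selection $m(y)=\inf F_y$, and (d) by the product upper bound plus a slicing lower bound. Two remarks. First, for the lower bound in (d) you could avoid the Frostman measure and the measurable selection entirely by invoking the paper's own Theorem \ref{092} (Falconer's slicing theorem, \cite[Theorem 5.8]{Falc86}): every slice $(A+I_i)^y$ with $y\in P$ contains an interval of length $L$, hence $\mathcal{H}^1\bigl((A+I_i)^y\bigr)\geq L$, and taking $s<\dim_{\rm H}P$ with $\mathcal{H}^s(P)>0$ gives $\mathcal{H}^{1+s}(A+I_i)>0$ directly; this is exactly the device the authors use in the proof of Theorem \ref{N33}(b), so it is the argument they almost certainly have in mind. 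Your mass-distribution computation is equivalent and equally valid. Second, you are right to flag that (c) and the lower bound of (d) need some regularity of $A$ (compact, or analytic for the selection), whereas the lemma is stated for bare $A$; since every application in the paper is to a compact self-similar set this is harmless, but it is worth saying, as is the observation that the $\mathcal{L}^1\left(A+\Gamma\right)$ appearing in parts (a) and (b) of the statement must be read as $\mathcal{L}^2$, which is how both you and the paper's subsequent applications treat it.
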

The proofs of statements (a)-(d) of Lemma \ref{g94} follow from fairly straightforward covering arguments; we omit the details.

Using Lemma \ref{g94}, we can prove the following:

\begin{theorem}[Non-vanishing curvature is a necessary condition of Theorem \ref{main_measdim}]\label{084}
  If $\ \Gamma$ is piecewise linear then neither \textbf{(a)} nor \textbf{(b)} of Theorem \ref{main_measdim} hold. That is
  \begin{description}
    \item[(a')] $\exists A\subset\mathbb{R}^2$ with
   $\dim_{\rm H} A>1$ and $\pazocal{L}_2\left(A+\Gamma\right) =0$
    \item[(b')]  $\exists A\subset \mathbb{R}^2$
    with
   $\dim_{\rm H} A<1$ and $\dim_{\rm H} (A+\Gamma)<1+\dim_{\rm H} A$.
\item[(c')] $\exists A,B\subset \mathbb{R}^2$ with
$\dim_{\rm H} A=\dim_{\rm H} B=1$, $\pazocal{L}_2(A+S^1)=0$, and $(B+S^1)^{\circ}\ne\emptyset$.
  \end{description}
  Actually in both cases $(a')$ and $(b')$, we can choose $A$ to be self-similar with strong separation condition. That is we can find a finite list of contracting similarities $\left\{S_1, \dots ,S_m\right\}$, $S_i:\mathbb{R}^2\to\mathbb{R}^2$ such that
  $A=\bigcup\limits_{i=1}^mS_i(A)$ and $S_i(A)\cap S_j(A) =\emptyset $ for $i,j\in\left\{1, \dots ,m\right\}$, $i\ne j$.
\end{theorem}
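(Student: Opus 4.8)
The plan is to reduce the whole statement to a question about orthogonal projections of $A$ in the finitely many edge-directions of $\Gamma$, and then to engineer self-similar sets whose projections behave pathologically in exactly those directions. Write $\Gamma=\bigcup_{i=1}^n I_i$ with edge-angles $\alpha_1,\dots,\alpha_n$. By Lemma \ref{g94} the measure, dimension, and interior of $A+\Gamma$ are governed solely by the $n$ projections $\mathrm{proj}_{\alpha_i^{\perp}}(A)$: part (b) forces $\mathcal{L}^2(A+\Gamma)=0$ as soon as all of these are Lebesgue-null, while part (d) gives $\dim_{\rm H}(A+\Gamma)=1+\max_i\dim_{\rm H}\mathrm{proj}_{\alpha_i^{\perp}}(A)$. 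Thus (a') reduces to producing a self-similar $A$ with $\dim_{\rm H}A>1$ whose projection in each of the $n$ prescribed directions is Lebesgue-null, and (b') reduces to producing a self-similar $A$ with $\dim_{\rm H}A<1$ whose projection in each prescribed direction has dimension strictly below $\dim_{\rm H}A$.

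Part (c') I would dispatch immediately and separately, since it concerns the circle rather than a polygon. For $A$ take the four-corner set $C(1/4)$: it is an irregular $1$-set, so Corollary \ref{g97}(a) gives $\mathcal{L}^2(C(1/4)+S^1)=0$. For $B$ take $S^1$ itself, which is a $1$-set and satisfies $B+S^1=\{z:|z|\le 2\}$, the closed disk of radius $2$, so $(B+S^1)^{\circ}\neq\emptyset$. This settles (c').

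For (a') and (b') I would first record the clean model cases and then the general mechanism. When $\Gamma$ is rectilinear the relevant projections are onto the coordinate axes, and $A=C_\gamma\times C_\gamma$ with $\gamma\in(\tfrac14,\tfrac12)$ already proves (a'): it is self-similar with strong separation, $\dim_{\rm H}A=2\log 2/|\log\gamma|\in(1,2)$, yet both axis-projections equal $C_\gamma$, which is Lebesgue-null. For a single arbitrary direction $\theta$ (rational or irrational) a rotation keeps everything self-similar, since rotating $C_\gamma\times C_\gamma$ by $\theta$ conjugates the homothety IFS to another homothety IFS (preserving SSC and dimension) whose projection in direction $\theta$ is again the null set $C_\gamma$. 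To handle all $n$ directions at once I would run an iterated (``Venetian-blind'') self-similar construction: fix a base scale and devote the scales $k\equiv j\pmod n$ to the direction $\theta_j=\alpha_j^{\perp}$, at those scales arranging the surviving pieces so that their shadows in direction $\theta_j$ overlap and the total shadow-length contracts by a fixed factor $<1$. Repeating the same pattern on each block of $n$ scales makes the limit set self-similar with respect to the $n$-fold composed maps, and spacing the pieces enforces the strong separation condition. Since each direction is revisited at infinitely many scales, every one of the $n$ projections is driven to measure zero, while retaining enough pieces at each scale keeps $\dim_{\rm H}A>1$; this is (a'). For (b') the identical scheme is run in the sparse regime: choose few enough pieces that $\dim_{\rm H}A<1$, and at the scales devoted to $\theta_j$ align the pieces along $\theta_j^{\perp}$ so the projected copies collide, forcing $\dim_{\rm H}\mathrm{proj}_{\theta_j}(A)<\dim_{\rm H}A$; Lemma \ref{g94}(d) then yields $\dim_{\rm H}(A+\Gamma)<1+\dim_{\rm H}A$.

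The main obstacle is precisely the simultaneous control of all $n$ directions inside a single self-similar system satisfying SSC, and especially the irrational-slope directions. For a fixed rational direction one can force exact collisions on an integer grid (the Kenyon/Hochman picture invoked later in the paper), but no single integer-grid self-similar set collides digits in an irrational direction, and by Marstrand's projection theorem a set of dimension $>1$ has positive-measure projection in almost every direction; hence the prescribed directions must be made genuinely exceptional by design. My confidence is highest in the reduction via Lemma \ref{g94} and in the single-direction rotation construction; the delicate point I would need to verify carefully is that the iterated block-construction simultaneously (i) keeps the dimension in the desired regime, (ii) drives each of the $n$ shadows to measure zero (resp. to strictly smaller dimension), and (iii) truly closes up into one self-similar IFS with strong separation rather than a mere Moran-type construction.
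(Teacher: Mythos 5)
Your reduction of (a$'$) and (b$'$) to the projections $\mathrm{proj}_{\alpha_i^{\perp}}(A)$ via Lemma \ref{g94} is exactly the paper's first step, and your part (c$'$) is correct, though by a different route: you invoke the heavy Corollary \ref{g97} for $C(1/4)$ and take $B=S^1$ (indeed $S^1+S^1$ is the closed disk of radius $2$), whereas the paper gets away with the elementary Remark \ref{003} by placing $A$ on the $x$-axis as a Lebesgue-null set of Hausdorff dimension $1$ and taking $B=[0,1]\times\{0\}$. Both are fine.

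The gap is in the core of (a$'$) and (b$'$): the object whose existence the theorem asserts is never actually constructed. You correctly identify that the whole difficulty is to build \emph{one} self-similar IFS with strong separation whose projections degenerate simultaneously in the $n$ prescribed (possibly irrational) directions, and you propose a Venetian-blind block construction over $n$ scales --- but you explicitly defer verification of precisely the three points (dimension control, degeneracy of all $n$ shadows, genuine self-similarity with SSC) that constitute the proof. As it stands this is a plan, not an argument; in particular, for (a$'$) ``shadows overlap and the shadow-length contracts by a factor $<1$'' is not by itself enough once you insist on exact self-similarity, and for (b$'$) a dimension drop requires exact coincidences of projected cylinders, not mere overlap. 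The paper closes this gap with a much simpler one-scale device that you may want to adopt: take $N=2n$ homotheties $S_{k,j,\lambda}(\mathbf{x})=\lambda\mathbf{x}+\mathbf{t}_{k,j}$ ($k=1,\dots,n$, $j=1,2$) and choose the translations so that SSC holds in the plane while, for each edge direction $\alpha_i$, the pair $\mathbf{t}_{i,1},\mathbf{t}_{i,2}$ has the same projection onto the relevant line. Then each projected IFS consists of at most $N-1$ distinct similarities, so $\dim_{\rm H}\mathrm{proj}_{\alpha_i}(A_\lambda)\le \log(N-1)/(-\log\lambda)$ while $\dim_{\rm H}A_\lambda=\log N/(-\log\lambda)$; choosing $\lambda\in\bigl(\frac{1}{N},\frac{1}{N-1}\bigr)$ yields (a$'$) via Lemma \ref{g94}(b), and $\lambda<\frac{1}{N}$ yields (b$'$) via Lemma \ref{g94}(d). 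Stacking the coinciding pair perpendicularly to the projection direction keeps the two squares disjoint, so the $n$ coincidence conditions are compatible with SSC, and irrational directions cost nothing extra. Your block construction could likely be made to work by the same coincidence trick applied once per block, but the single-scale version makes the verification you were worried about immediate.
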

The proof of Theorem \ref{084} relies on an iterated function system, as well as Hutchinson's Theorem, and is given in Section \ref{P98}.

\subsection{The sum of the four corner Cantor set and a rotated square  } 
Here we always assume that  $\Gamma=N_\theta$, the angle $\theta$ rotation of the perimeter of $[0,1]^2$. 
When $\Gamma=N_{\theta}$ we can completely describe the size of $C(1/4)+ N_{\theta}$, for every $\theta$.
We follow Kenyon \cite{Ken97} (see also \cite[Section 10.3]{Mat15}.
 \begin{definition}[Small and Big rational angles]\label{g81}
Let $m$ be an integer and let $j_0:=\max\left\{j: 4^j \mbox{ divides } m\right\}$. Let
$m^*\in\left\{0,1,2,3\right\}$  be defined by
$$
m^*:=\frac{m}{4^{j_0}} \bmod{4}.
$$
That is $6^*=2$ since $6$ is not multiple of any power of $4$  but $112^*=3$ since $112=7 \cdot 4^2$ and $7 \bmod{4}$ is $3$.
We say that an angle $\theta\in[0,\pi)$ is \textit{rational} if $\tan\theta\in\mathbb{Q}$.
We partition the rational angles as follows:
$$
\mathrm{Small}:=\left\{\theta\in[0,\pi):
\tan\theta=\frac{p}{q},  \ p^*, q^* \mbox{ are both odd numbers},
\right\},
$$
$$
\mathrm{Big}:=\left\{\theta\in[0,\pi):
\tan\theta=\frac{p}{q}, \, \mbox{ either $p^*$ or $q^*$ is an even number},
\right\},
$$
where we assumed in both of the formulas  above that the  greatest common divisor of $p$ and $q$ is $1$.
\end{definition}
Kenyon  gave a full characterization (see \cite[Section 10.3]{Mat15}) of the $\mathrm{proj}_\theta$-projection of $C(1/4)$ for all $\theta\in[0,\pi)$:
\begin{theorem}[Kenyon]\label{g66} If:
\begin{itemize} 
    \item[(a)]  $\theta\in \mathrm{Small}$, then
    $\dim_{\rm H} \left(\mathrm{proj}_\theta C(1/4)\right)<1$.
    \item[(b)] 
    $\tan\theta\not\in \mathbb{Q}$, then
    $\pazocal{L}_1(\mathrm{proj_\theta}C(1/4))=0$.
    \item[(c)] $\theta\in \mathrm{Big}$, then
    $\left(\mathrm{proj}_\theta C(1/4)\right)^{\circ}\ne\emptyset$.
  \end{itemize}
\end{theorem}

Combining Lemma \ref{g94}, Theorem \ref{g66}, and the work of Hochman in \cite{Hoch14}, we have
\begin{theorem}[Measure, dimension, and interior of the sum of the four corner Cantor set and a rotated square]\label{g80} If:
\begin{itemize} 
    \item[(a)] 
    if $\theta\in \mathrm{Small}$, then $\dim_{\rm H} (C(1/4)+N_\theta)<2$.
    \item[(b)] 
    $\tan\theta\not\in\mathbb{Q}$, then
    $\pazocal{L}_2(C(1/4)+N_\theta)=0$ but $\dim_{\rm H} (C(1/4)+N_\theta)=2$.
    \item[(c)] if $\theta\in \mathrm{Big}$, then $(C(1/4)+N_\theta)^{\circ}\ne\emptyset$.
  \end{itemize}
\end{theorem}

\begin{proof}[Proof of Theorem \ref{g80}]
The fact that
 $\dim_{\rm H} (C(1/4)+N_\theta)=2$ can be seen as follows: inspecting the proof of Hochman's main result in \cite{Hoch14} implies
 that $\dim_{\rm H} \left(  \mathrm{proj}_\theta(C(1/4)) \right)=1$ for all irrational $\theta$.
Then we apply part (d) of Lemma \ref{g94}. All other assertions of Theorem \ref{g80} are  immediate combinations of Kenyon's Theorem above and Lemma \ref{g94}.
\end{proof}

Actually, in this case Palis-Takens conjecture (see \cite{PT93} or \cite{MG}) holds: $C(1/4)+N_\theta$ is either big in a sense that it contains some interior points or small in the sense that is has zero Lebesgue measure but it never happens that $C(1/4)+N_\theta$
has positive Lebesgue measure with empty interior.

\begin{remark}\label{000}
We have considered the cases when the curvature of $\Gamma$ either vanishes everywhere or vanishes nowhere. Now we address the in-between situation.
  We remark that one can construct a $\mathcal{C}^2$ curve $\Gamma$ containing no straight line segment and an irregular $1$-set $A$ such that $\pazocal{L}_2(A+\Gamma)>0$. 
Namely, for every $\theta$
let $A(\theta)$ be the  angle $\theta$ rotation of the four corner Cantor set, $C(1/4)$. We write  $A_2(\theta)$ for the orthogonal projection
of $A(\theta)$ to the $y$-axis.  It follows from part (c) of Theorem \ref{g66} that we can choose such a $\theta$ that
$\pazocal{L}_1(A_2(\theta))>0$.
We define $\Gamma$  as the graph of a  $\mathcal{C}^2$  function
  $\phi:[0,1]\to \mathbb{R}$ for which the set
  $$
  U:=\left\{(t,0),t\in[0,1],\phi''(t)=0,\phi(t)=0\right\}
  $$
  is a Cantor set of positive one-dimensional Lebesgue measure on the $x$-axis;
  note, this can be done in such a way that $U$ does not contain an interval.  
  Then $U \subset \Gamma$ and by Fubini Theorem 
  $\pazocal{L}_2(A+U)>0$. Consequently we have
$\pazocal{L}_2(A+\Gamma)>0$.
\end{remark}

\section{Proof of Theorem \ref{main_measdim}}\label{N30}
In this section, we present two proofs of Theorem \ref{main_measdim}.  
First, we present an elegant proof using established Fourier analytic methods in Section \ref{section_fourier}.
Second, we present a lengthier proof based on the transversality method.  The merit of this second lengthier proof is that it sets the foundation to prove Theorem \ref{main_critical}.    
The proof based on transversality will proceed in four sections:
In Section \ref{r88} we set up the necessary notation, consider the pairs of $\la$ and $a$ for which $\Phi_\la(a)$ is defined, and compare the measure of the sum set $A+\Ga$ to the measure of the images $\Phi_\la(A)$. 
The main tool, the transversality method, is introduced in Section \ref{KT50} along with some background. 
In Section \ref{main_measdim_sec}, we use the transversality method to complete the proof of Theorem \ref{main_measdim}.
Finally, the main technical work is done in Section \ref{checking_section}, where we verify that our family of maps satisfies the transversality condition.  
%An elegant alternative proof using Fourier analysis appears in Section \ref{section_fourier}.

%FOURIER PROOFS
\subsection{Proof of Theorem \ref{main_measdim} with Fourier analysis and energy integrals}\label{section_fourier}

Theorem \ref{main_measdim} can be proved using well known properties of the Fourier transform.  
The proof in this section has advantages and disadvantages over the proof presented below in sections \ref{r88}- \ref{checking_section}.  
The proof here 
 is shorter, which is due to the fact that it relies on previously established theory and requires less set up. 
 Moreover, the proof in this section is very adaptable to higher dimensions, and analogous statements are available for $\mathbb{R}^d$, $d\geq 2$. 
Nevertheless, Theorem \ref{main_critical} seems to fall out of the scope of Fourier analytic methods, at least out of those used here, while our proof based on transversality can be used to handle the critical dimension when $\dim_{\rm H} (A) = 1$.  

\begin{proof}[Proof of Theorem \ref{main_measdim} part (a)]
 Assume that $A\subset [0,1]^2$ with $\dim_{\rm H}(A) >1$.  
Recall from \cite[Theorem 2.8]{Mat15}, for instance, the energy characterization of the definition of the Hausdorff dimension: 
 $$\dim_{\rm H} (A)=   \sup\left\{s:\exists \mu\in\mathcal{M}(A),  I_s(\mu)<\infty   \right\},$$
  where $  I_s(\mu) = \iint |x-y|^{-s} d\mu(x)d\mu(y)$ and $\mathcal{M}(A)$ is the set of non-zero, finite, Borel measures with compact support so that $\mathrm{spt}(\mu)\subset A$. 
  It follows that 
  for each $s<\dim_{\rm H}(A)$, there exists a measure $\mu \in \mathcal{M}(A)$ so that  $I_s(\mu)< \infty$.  
Further, recall that for $0<s<2$
  \begin{equation}\label{O91}
  \ybox{I_s(\mu)=\gamma(2,s) \cdot \int |\widehat{\mu}(\xi)|^2 \cdot |\xi|^{s-2}d\xi}
\end{equation}
for some positive constant $\ga(2,s)$ \cite[Theorem 3.10]{Mat15}. 
Combining these observations, 
we see that for each $s<\dim_{\rm H}(A)$, there exists a measure $\mu \in \mathcal{M}(A)$ so that
\begin{equation}\label{energyA}
\alert{   \int |\widehat{\mu}(\xi)|^2 \cdot |\xi|^{s-2}d\xi     <\infty} \ \mbox{ and }\mathrm{spt}(\mu)\subset A.
\end{equation}

 If $\Gamma'$ is a $C^2$ subcurve of $\Gamma$ with non-vanishing curvature, then there exists a measure $\sigma_{\Ga}$ supported in $\Gamma'$ with
$|\widehat{\sigma_{\Ga}}(\xi)| \lesssim (1+|\xi|)^{-\frac{1}{2}}$. 
(see, for instance, \cite[p.347 - 348]{Stein93} or \cite{W04}). 

 Our aim is to verify that $\pazocal{L}_2(A+\Gamma)>0$. 
Recall that for $\nu\in \mathcal{M}(\mathbb{R}^2)$,
if $\widehat{\nu}\in L^2(\mathbb{R}^2)$, then
$\nu$ is absolute continuous w.r.t. the Lebesgue measure with $\pazocal{L}_2$ density \cite[Theorem 3.3]{Mat15}. 
 Applying this fact with $\nu= \mathrm{spt}(\mu*\sigma_\Gamma)$
 and observing that $\mathrm{spt}(\mu*\sigma_\Gamma)\subset A+S^1$,  it suffices to show  that
\begin{equation}\label{O88}
 \int|\widehat{\mu*\sigma_\Gamma}(\xi)|^2d\xi<\infty .
\end{equation}

Combining the basic facts above, we have
\begin{equation}\label{O89}
  \int_{|\xi|>1}|\widehat{\mu*\sigma_\Gamma}(\xi)|^2 d\xi
   \lesssim
   \int_{|\xi|>1}|\widehat{\mu}(\xi)|^2 \cdot |\xi|^{-1}d\xi\sim I_1(\mu)<\infty.
\end{equation}
The integral over $\{|\xi|\le 1\}$ is trivially bounded by the finiteness of measures $\mu$ and $\sigma_{\Ga}$. 
\end{proof}

\begin{proof}[Proof of Theorem \ref{main_measdim} part (b)]
A simple energy integral proof recovers the proof of part (b); the details are given in D. Oberlin \cite{O08}, and we include them here to ease of reading.  We use the notation introduced above in part (a).
 Assume that $A\subset \mathbb{R}^2$ with $\dim_{\rm H}(A) \le1$.  As in part (a), for all $s<\dim_{\rm H}(A)$, there exists a measure $\mu$ on $A$ so that
\begin{equation}
\alert{   \int |\widehat{\mu}(\xi)|^2 \cdot |\xi|^{s-2}d\xi     <\infty} \ \mbox{ and }\mathrm{spt}(\mu)\subset A.
\end{equation}
It is straightforward to verify, using the properties provided in part (a), that $I_{\alpha}(\mu*\sigma_\Gamma)$ is finite whenever $\alpha<1 + \dim_{\rm H}(A)$.  It follows then, by the characterization of Hausdorff dimension given in terms of energy integrals (see part (a)), that $\dim_{\rm H}(A+\Gamma) \geq 1+ \dim_{\rm H}(A). $ Equality follows by \eqref{Lip} above. 
\end{proof}

% PROOF BASED ON TRANSVERSALITY
 \subsection{Preparation for the transversality method}\label{r88}
%ASSUMPTIONS ON \GAMMA
We now move to the second proof of Theorem \ref{main_measdim}.  
The main goal of this section is establishing a simple set up so that the images $\Phi_\la(a)$ are defined as $\la$ ranges in a fixed parameter interval and $a$ varies overs $A$; the key equation established in this section is \eqref{Fubini_eq}. 
First, we make some simplifying assumptions on the curve,   
$\Gamma$, a piecewise $\mathcal{C}^2$ curve with non-vanishing curvature and positive length. 
\begin{definition}[Good curves are strictly monotone $\mathcal{C}^2$ graphs with bounded growth]\label{good_curve}
Suppose that $\ga$ is a real-valued $ \mathcal{C}^2$ function on a non-trivial compact interval $I\subset \R$ so that  $|\ga'| \le 1$, 
$\gamma'$ is strictly monotonic on $I$,
 and $\ga''\neq 0$ on $I$.  
We call a curve $\Ga$ in the plane \textit{good} if it can be expressed as the graph 
$$\Ga= \{(t, \ga(t)) : t\in I\},$$ 
of such a function $\ga$ over $I$.  
\end{definition}

Note that any simple $\mathcal{C}^2$ curve $\Ga$ of finite length can be decomposed into  
 finitely many subcurves $\{\Gamma_i\}$, possibly with overlapping end-points, so that each subcurve is a good curve or a rotation of a good curve.  
As it is enough to establish the dimension or measure of $A+\Ga_i$ for each such subcurve, by applying a rotation of the axes, we may simply assume that $\Ga$ is itself a good curve.  
 Further, by shifting $\Ga$, we may assume that 
 $$I=[0,L]$$
 and $\Ga \subset I^2 = [0,L]^2.$ 
 By applying a reflection, we can further assume that
 \begin{equation}\label{concave}
 \gamma''<\La< 0
 \end{equation}
 on $I$ for some $\La<0$
  so that $\Ga$ is concave down.

 %ASSUMPTIONS ON A
 Set $\Om= \left[0, \frac{L}{2}\right]^2$ and let 
 $A\subset \Om$ be a Borel subset of $\R^2$.
We first establish the theorem for such sets $A$.   
The theorem for general Borel sets $A$ follows from 
decomposing the plane into squares $\{S_i\}_i$ of side length $\frac{L}{2}$, considering the intersection $A\cap S_i$ for a given $i$, and applying a shift: When $\dim_{\rm H} A>1$, there is some choice of $i$ so that $\dim_{\rm H} (A\cap S_i) >1$; 
When $\dim_{\rm H} A\le 1$, a sequence of squares can be chosen so that the dimension of $A\cap S_i$ approaches the dimension of $A$, and a limiting process is used to complete the proof.  

%DEFINING PHI
We refer to Figure \ref{a11} for the proceeding notation.  For a $\lambda \in \mathbb{R},$ denote the vertical line
 $$\ell _\lambda:=\left\{(x,y):x=\lambda\right\}.$$
\begin{figure}[hhh]
  \centering
  \includegraphics[width=7cm]{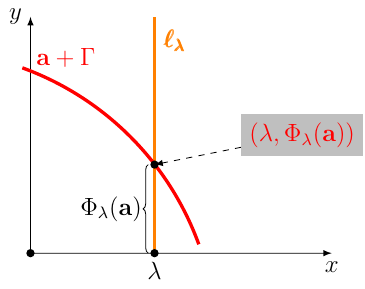}
\caption{$\Phi_\lambda(a)$}\label{a11}
\end{figure}
Define the one-parameter family of mappings $\{\Phi_\lambda(\mathbf{ \cdot })\}_{\lambda\in I}$,
$\Phi_\lambda:\Omega \to \ell _\lambda$ as the $y$-coordinate of the intersection 
$$
  \Phi_\lambda:a\mapsto \left\{\ell _\lambda\cap(a+\Gamma)\right\}.
$$
When the intersection is non-empty,  $\ell _\lambda\cap(a+\Gamma)$ is a singleton and its $y$-coordinate can written explicitly as
\begin{equation}\label{Phi}
  \Phi_\lambda(a)= a_2 + \ga(\la-a_1),
\end{equation}
where $a = (a_1, a_2)$. 
Note $\Phi_\lambda(a)$ is defined for $\la\in I$ provided that $\la-a_1 \in I$.

\begin{definition}\label{good_interval}[Good intervals]
A compact subinterval $\widehat{I}\subset I$ is a \textit{good interval for the family}
$\{\Phi_\la\}_{\la\in \widehat{I}}$ if $\Phi_\la(x)$ is defined for each $\la\in \widehat{I}$ and $x\in \Om$. 
This happens identically when $\la-x_1 \in I$ for each $x=(x_1, x_2) \in \Om$ and $\la \in \widehat{I}$.  
Provided $\widehat{I}$ is good, we have
\begin{equation}\label{slice_eq} 
\Phi_\la(A) = \left(A+ \Ga\right)\cap \ell_\la
\end{equation}
whenever $A\subset \Om$ and $\la \in \widehat{I}$. 
\end{definition}

Note that 
$\widehat{I}:= \left[ \frac{L}{2}, L\right]$
 is a good interval; indeed, 
 if $a=(a_1, a_2) \in \Om=\left[0, \frac{L}{2}\right]^2$ and we restrict $\la\in \left[ \frac{L}{2}, L\right]$, then $\la-a_1 \in I= [0,L]$ so that $\Phi_\la(a)$ is defined. 
Since $\Phi_\la(a)$ is defined for $a \in A$ and $\la \in \widehat{I} =  [\frac{L}{2}, L]$, 
 equation \eqref{slice_eq} combined with Fubini's theorem implies that
 
\begin{equation}\label{Fubini_eq}
 \int_{L/2}^{L}   \mathcal{H}^1\left( \Phi_\la(A) \right) d\la
=
 \int_{L/2}^{L}   \mathcal{H}^1\left( \left( A+\Ga\right) \cap \ell_\la \right) d\la
\le 
\pazocal{L}_2\left( A+ \Ga \right).
\end{equation}
This observation will be used to prove part (a) of Theorem \ref{main_measdim} and a similar Fubini-like equation introduced below will be used to prove part (b). 

%THE EXTENSION 
Although it will not be necessary for the proof of Theorem \ref{main_measdim}, we note that by extending the curve $\Ga$ and defining a corresponding extension maps $\tilde{\Phi}_\la$, we can arrange matters so that the original parameter interval $I$ is good for $\{\tilde{\Phi}_\la\}_{\la \in I}$.  This extension will play a role in the proof of Theorem \ref{main_critical}, where we will require a reverse inequality to that in \eqref{Fubini_eq}.  
As this extension will rely on the notation above, we introduce it here in the remainder of the section.  

Observe that for $A\subset \Om$, we have $A+\Ga\subset [0,2L]^2$.  
We extend the curve $\Ga$ and introduce extension operators so that $\Phi_\la(a)$ is defined for each $\la \in [0,2L]$ and each $a\in A$.  
To this end, 
extend the function $\ga$ on $I$ to a function $\widetilde{\ga}$ on $\widetilde{I}=\left[ -\frac{L}{2},2L \right]$ so that 
$$\widetilde{\Ga}= \{(t, \widetilde{\ga}(t)) : t\in \widetilde{I} \}$$
is a good curve in the sense of Definition \ref{good_curve}; 
The details of this extension are given in \cite[Section 2.3]{DT22}.
For $x = (x_1, x_2) \in \Om=\left[0, \frac{L}{2}\right]^2$ and $\la \in [0,2L]$,  define 
\begin{equation}\label{Phi_extend}
\widetilde{\Phi}_\la(x) = x_2 + \widetilde{\ga}(\la-x_1).
\end{equation}
Now
$\widetilde{\Phi}_\la(x)$ agrees with $\Phi_\la(x)$ whenever $\la-x_1 \in I$.  
Note that for $x\in \Om$ and $\la\in [0,2L]$, it holds that $\la-x_1 \in \widetilde{I}$ so that 
$\widetilde{\Phi}_\la(x)$ is defined.  It follows that 
\begin{equation}\label{slice_extend_eq} 
\widetilde{\Phi}_\la(A) = \left(A+ \widetilde{\Ga}\right)\cap \ell_\la
\end{equation}
whenever $A\subset \Om$ and $\la \in [0,2L]$.  
Now, 
\begin{equation}\label{Fubini_extend_eq}
\pazocal{L}_2\left( A+ \Ga \right) 
\le \int_0^{2L}   \mathcal{H}^1\left( \left( A+\widetilde{\Ga}\right) \cap \ell_\la \right) d\la
= \int_0^{2L}   \mathcal{H}^1\left( \widetilde{\Phi}_\la(A) \right) d\la,
\end{equation}
where we used that $A+\Ga\subset \left( A+\widetilde{\Ga}  \right) \cap [0,2L]^2$ and Fubini's theorem in the first inequality,  and \eqref{slice_extend_eq} in the second.  
This inequality will be used in the proof of Theorem \ref{main_critical}

\subsection{The transversality method}\label{KT50}
Before stating the main tool, Corollary \ref{a23}, which will be used to prove Theorem \ref{main_measdim}, we give a brief introduction to the notion of transversal maps.  
The transversality method for Hausdorff dimension of the attractors of a one-parameter family of IFS (iterated function systems)  first appeared in the paper of the first author and M. Pollicott \cite{PoSi}. Then B. Solomyak
developed the transversality condition for the absolute continuity of the invariant measures for a one parameter family of IFS in \cite{Sol95}.
Moreover, B. Solomyak combined the methods from \cite{PoSi} and \cite{Sol95} in \cite{Sol98}
to establish a much more general transversality method for generalized projections (like $\Phi_\lambda( \cdot )$  above). The next step was made by Peres and Schlag \cite{PeSc00} to work out an even more general transversality method.  A slight modification of Peres and Schlag transversality appeared in R. Hovila,E.  J{\"a}rvenp{\"a}{\"a}, M. J{\"a}rvenp{\"a}{\"a}, F. Ledrappier \cite[Definition 2.4]{HovJ2Led}.  This result will be stated and used in Section \ref{g88}.

A general version of the transversality method was proved by B. Solomyak in \cite[Theorem 5.1]{Sol98}. An immediate corollary of this in our special case is as follows.  Here, we use the notation introduced above in Subsection \ref{r88}. 

\begin{corollary}[Solomyak-the transversality condition]\label{a23}
Let $\Gamma$ be a good curve, let $\widehat{I}$ be a good interval 
and assume that
$A \subset \Om$.
If hypotheses  \textbf{(H1)} and \textbf{(H2)} below hold:
  \begin{description}
  \item[(H1)] $  \|\Phi_\lambda(a)-\Phi_\lambda(b)\|<c \cdot \|a-b\|$, $\forall a,b\in \Om$ and
  \item[(H2)]  $
  \pazocal{L}_1\left\{
  \lambda\in \widehat{I}:
  \frac{\|\Phi_\lambda(a)-\Phi_\lambda(b)\|}
  {\|a-b\|}
  <r
  \right\}
   \leq c \cdot r
  $ for all $r>0$, $\forall a,b\in \Om$ with $a\neq b$,
\end{description}then
\begin{description}
  \item[(i)] if $\dim_{\rm H} (A)>1$, then $\pazocal{L}_1(\Phi_\lambda(A))>0$
  for $\pazocal{L}_1$ almost all $\lambda\in \widehat{I}$;
  \item[(ii)]
  if $\dim_{\rm H} (A) \leq 1$, then $\dim_{\rm H} (\Phi_\lambda(A))=\dim_{\rm H} (A)$ for $\pazocal{L}_1$ almost all $\lambda\in \widehat{I}$.
\end{description}
Here, the constant $c>0$ denotes a positive number which is  independent of $a,b,r$ and $\la$.
\end{corollary}

Hypothesis \textbf{(H2)} is called the "\emph{transversality condition}".
Hypothesis  \textbf{(H1)} means that $\Phi_\lambda$ is a Lipschitz mapping, and a simple computation shows that  \textbf{(H1)} holds for each $\lambda \in \widehat{I}:=[\frac{L}{2},L]$.

We now turn to completing the proof of Theorem \ref{main_measdim} using Corollary \ref{a23}.  
We momentarily assume that the maps $\Phi_\al$ defined in \eqref{Phi} satisfy the transversality condition of \textbf{(H2)}, and delay verifying transversality until Subsection \ref{checking_section} below.

\subsection{Proof of both parts (a) and (b)  of Theorem \ref{main_measdim}
with the transversality method }\label{main_measdim_sec}
 We verify that \textbf{(H2)} holds in Section \ref{checking_section}. To complete the proof of Theorem \ref{main_measdim}, we verify the following Lemma.
\begin{lemma}\label{a24}
The conclusions \textbf{(i)} and \textbf{(ii)} of Corollary \ref{a23}
imply that the conclusions (a) and (b)  of Theorem \ref{main_measdim}.
\end{lemma}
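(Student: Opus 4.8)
The plan is to push the two conclusions of Corollary~\ref{a23}, which live on the one–dimensional slices $\Phi_\lambda(A)$, up to two–dimensional statements about $A+\Gamma$ by integrating over the slice parameter $\lambda$. Everything rests on the inclusion \eqref{a10}, namely $\Phi_\lambda(A)\subset (A+\widetilde{\Gamma})\cap\ell_\lambda$, which places $\Phi_\lambda(A)$ inside the horizontal section of $A+\widetilde{\Gamma}$ at height $\lambda$. Since $\widetilde{\Gamma}\subset\Gamma$ we have $A+\widetilde{\Gamma}\subset A+\Gamma$, so by monotonicity of both $\mathcal{L}^2$ and $\dim_{\rm H}$ it suffices to prove the two conclusions of Theorem~\ref{N33} with $\Gamma$ replaced by $\widetilde{\Gamma}$. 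First I would also reduce to $A$ compact: for \textbf{(a)} this makes $A+\widetilde{\Gamma}$ compact (hence Borel) as the continuous image of $A\times\widetilde{\Gamma}$, so that ordinary Fubini and the measurability of $\lambda\mapsto\mathcal{L}^1\big((A+\widetilde{\Gamma})\cap\ell_\lambda\big)$ are automatic, while for \textbf{(b)} passing to a compact subset of $A$ of dimension arbitrarily close to $\dim_{\rm H}(A)$ is harmless for the lower bound.

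For part \textbf{(a)}, assuming $\dim_{\rm H}(A)>1$, conclusion (i) gives $\mathcal{L}^1(\Phi_\lambda(A))>0$ for $\mathcal{L}^1$-a.e. $\lambda\in U$, so by \eqref{a10} the section $(A+\widetilde{\Gamma})\cap\ell_\lambda$ has positive $\mathcal{L}^1$-measure for a.e.\ $\lambda\in U$. Since $U$ is an interval of positive length, Fubini's theorem applied to the horizontal slicing then yields
\[
\mathcal{L}^2(A+\widetilde{\Gamma})\ =\ \int \mathcal{L}^1\big((A+\widetilde{\Gamma})\cap\ell_\lambda\big)\,d\lambda\ \geq\ \int_U \mathcal{L}^1\big((A+\widetilde{\Gamma})\cap\ell_\lambda\big)\,d\lambda\ >\ 0,
\]
whence $\mathcal{L}^2(A+\Gamma)\geq\mathcal{L}^2(A+\widetilde{\Gamma})>0$, which is conclusion \textbf{(a)}.

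For part \textbf{(b)}, assuming $\dim_{\rm H}(A)\leq 1$, the upper bound $\dim_{\rm H}(A+\Gamma)\leq 1+\dim_{\rm H}(A)$ is classical, since $A+\Gamma$ is a Lipschitz image of $A\times\Gamma$ and $\Gamma$ is rectifiable (see \cite[Corollary 7.4]{Falc90}). For the reverse inequality I would use conclusion (ii): for a.e.\ $\lambda\in U$,
\[
\dim_{\rm H}\big((A+\widetilde{\Gamma})\cap\ell_\lambda\big)\ \geq\ \dim_{\rm H}(\Phi_\lambda(A))\ =\ \dim_{\rm H}(A).
\]
Because this lower bound on the section dimension holds on the positive-measure set $U$, the Fubini-like inequality for the Hausdorff dimension of sections gives $\dim_{\rm H}(A+\widetilde{\Gamma})\geq 1+\dim_{\rm H}(A)$, and hence $\dim_{\rm H}(A+\Gamma)\geq 1+\dim_{\rm H}(A)$. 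Combined with the upper bound this proves \textbf{(b)}; note that the borderline value $\dim_{\rm H}(A)=1$ is covered directly here, producing sections of dimension $1$ and thus $\dim_{\rm H}(A+\Gamma)=2$.

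The two monotonicity reductions and the measurability bookkeeping needed to run Fubini are routine once $A$ is taken compact, and part \textbf{(a)} is then a one–line application of ordinary Fubini. The one genuine input beyond Corollary~\ref{a23} is the slicing lower bound invoked in \textbf{(b)}: the assertion that if the horizontal sections of a Borel planar set have Hausdorff dimension at least $t$ for a set of heights of positive $\mathcal{L}^1$-measure, then the set itself has dimension at least $t+1$. This is the ``Fubini-like inequality'' flagged in the proof outline, and it is the only step that is not a formal consequence of the corollary; I would cite the standard slicing results (e.g.\ Marstrand and Mattila \cite{Mat15}) rather than reprove it. I expect this to be the main obstacle to watch, both because it is the nontrivial ingredient and because one must confirm its hypotheses (Borel measurability of $A+\widetilde{\Gamma}$ and of the section-dimension function) are met by the present construction.
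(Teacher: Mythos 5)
Your proposal is correct and follows essentially the same route as the paper: part (a) via Fubini applied to the horizontal slices containing $\Phi_\lambda(A)$, and part (b) via the product/Lipschitz-image upper bound together with the slicing lower bound, which the paper realizes concretely as Falconer's Theorem 5.8 of \cite{Falc86} (applied with $\mathcal{H}^t(\Phi_\lambda(A))=\infty>1$ for $t<\dim_{\rm H}A$). Your additional remarks on compactness and measurability only make explicit what the paper leaves implicit.
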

\begin{proof}
Part \textbf{(a)} of of Theorem \ref{main_measdim} is now an immediate consequence of  \textbf{(i)} in light of \eqref{Fubini_eq}.

We now prove that \textbf{(b)} follows from  \textbf{(ii)}.  Assume that
  $\dim_{\rm H} (A) \leq 1$. 
  First, we give the upper bound:
  \begin{equation}\label{a22}
    \dim_{\rm H} (A+\Gamma) \leq \dim_{\rm H} A +1.
  \end{equation}
Using \cite[Corollary 7.4]{Falc90} and observing that the box dimension of $\Gamma$ equals $1$, denoted $\dim_{\rm B}\Gamma=1 $, it follows that
  \begin{equation}\label{a21}
    \dim_{\rm H} (A\times \Gamma)=\dim_{\rm H} A+1.
  \end{equation}
Define $\Psi:\mathbb{R}^2\times \mathbb{R}^2\to \mathbb{R}^2$ by
  $\Psi(x,y):=x+y$. The fact that $\Psi$ is a Lipschitz mapping and \eqref{a21} yields that
  \begin{equation}\label{Lip}
  \dim_{\rm H} (A+\Gamma)=\dim_{\rm H} (\Psi(A,\Gamma))
   \leq  \dim_{\rm H} (A\times \Gamma) \leq \dim_{\rm H} A+1.
  \end{equation}
 This completes the proof of the upper bound \eqref{a22}.

Now we demonstrate that \textbf{(ii)} implies
  \begin{equation}\label{a20}
    \dim_{\rm H} (A+\Gamma) \geq 1+\dim_{\rm H} A.
  \end{equation}
  This immediately follows from \cite[Theorem 5.8]{Falc86}.
Namely,
\begin{theorem}[Fubini-like theorem]\label{092}
Let $A\subset \mathbb{R}^2$ and $U$ be a subset of the $y$-axis. We write
$A^y:=\left\{a: (a,y)\in A\right\}$. Suppose that for any $y\in U$ we have
$\mathcal{H}^t(A^y)>c$, for some constant $c$. Then
\begin{equation}\label{091}
  \mathcal{H}^{t+s}(A) \geq bc\mathcal{H}^s(U),
\end{equation}
where $b=b(s,t)$.
\end{theorem}
We use this theorem  with the following substitution: Set $\widehat{I} = \left[ \frac{L}{2}, L\right]$, the good interval of the $y$-axis introduced immediately following Definition \ref{good_interval}. 
Then by Corollary \ref{a23} part \textbf{(ii)}, for almost every $\la \in \widehat{I}$, 
$$\dim_{\rm H} \Phi_\lambda(A)=\dim_{\rm H} A,$$
Letting $t<\dim_{\rm H} A$ arbitrary, it follows that $\mathcal{H}^t(\Phi_\lambda(A))>1$ for almost every $t \in \widehat{I}$. 
For $\lambda\in \widehat{I}$, set
$$E:=\bigcup_{\lambda\in \widehat{I}}\Phi_\lambda(A) \, \, \text{ and } \,\, E^\lambda=\Phi_\lambda(A).$$ 
By Theorem \ref{092}, it follows that $\mathcal{H}^{1+t}(E) \geq b \mathcal{H}^1(\widehat{I} \,\, )>0$, which implies that
$\dim_{\rm H} E \geq 1+t$. 
As $t<\dim_{\rm H} A$ was arbitrary, it follows that 
$$
\dim_{\rm H} (A+\Gamma) \geq \dim_{\rm H} (E) \geq 1+\dim_{\rm H} A.
$$
\end{proof}

%CHEKCING TRANSVERSALITY
\subsection{Checking the transversality condition}\label{checking_section}
In this section, we verify that hypothesis \textbf{(H2)} of Corollary \ref{a23} is satisfied for the family $\{\Phi_\la\}_{\la\in \widehat{I}}$ whenever $\Ga$ is a good curve and $\widehat{I}$ is a good interval. 

\begin{proof}
Fix a choice of $\mathbf{a}=(a_1,a_2),\mathbf{b}=(b_1,b_2)\in \Om$ with $\mathbf{a} \neq \mathbf{b}$. The proof comes in two parts: the translated graphs $(\mathbf{a} + \Gamma)$ and $(\mathbf{b} + \Gamma)$ will either intersect at a point, or they will be disjoint. We first handle the intersecting case when
\begin{equation}\label{007}
  (\mathbf{a}+\Gamma)\cap(\mathbf{b}+\Gamma)\ne \emptyset .
\end{equation}
That is, suppose there exist $s_0,t_0\in I$ and $\mathbf{a}=(a_1,a_2)\in \R^2$ such that
$$
  \mathbf{x}:=(a_1,a_2)+(s_0, \gamma(s_0)) = (b_1,b_2)+ (t_0, \gamma(t_0)).
$$

\begin{figure}[h]
  \centering
  \includegraphics[width=12.5cm]{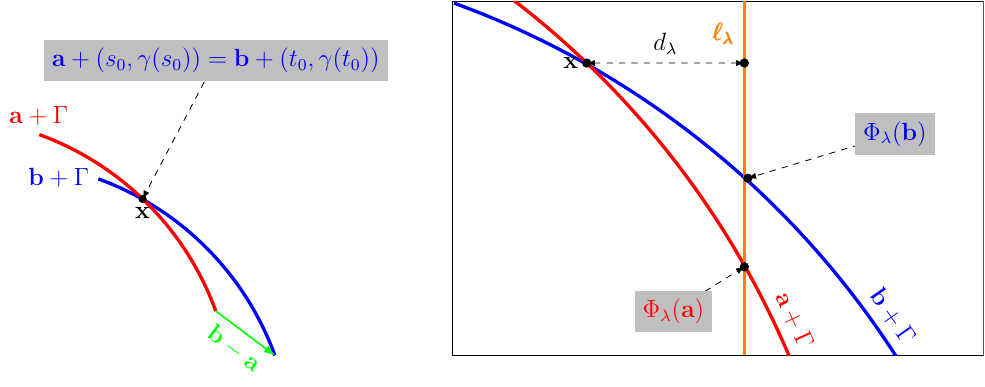}
  \caption{}\label{figure_2_c_a-vertical}
\end{figure}
Comparing coordinates, we have
 \begin{equation}\label{eq1}
 x_1= a_1  + s_0 = b_1 + t_0
 \end{equation}
 and 
 \begin{equation}\label{eq2}
 x_2 = a_2 + \gamma(s_0) = b_2 + \gamma(t_0).
  \end{equation}

For $\la \in \widehat{I} $, set
\begin{equation}\label{distance_la}
d_\la : = \mathrm{dist}(\mathbf{x},\ell _\lambda)=|\lambda - x_1|,\end{equation}
as depicted in Figure \ref{figure_2_c_a-vertical}. 
We verify that
\begin{equation}\label{transv_goal}
|\Phi_\lambda(\mathbf{a})-\Phi_\lambda(\mathbf{b})|
\sim
d_\lambda \cdot |\mathbf{a}-\mathbf{b}|
\end{equation}
where the implied constant is independent of $\lambda$, $\mathbf{a}$, and $\mathbf{b}$. Strictly speaking, we only need that the left hand side dominates the right hand side. Upon establishing equation \eqref{transv_goal}, it will follows that if $\de>0$ and $\lambda\in A$ satisfy
$|\Phi_\lambda(\mathbf{a})-\Phi_\lambda(\mathbf{b})|\le \de$, then
$$d_\lambda \cdot |\mathbf{a}-\mathbf{b}|\lesssim \de,$$
and so
\begin{equation}\label{1-trans}
\pazocal{L}_1 \{\la\in \widehat{I}: |\Phi_\lambda(\mathbf{a})-\Phi_\lambda(\mathbf{b})|\le \de\}
\lesssim \frac{\de}{|\mathbf{a}-\mathbf{b}|}
\end{equation}
which is the desired transversality condition.

%the result of $1$-transversality will be established when $\mathbf{a}$ and $\mathbf{b}$ are as in \eqref{007}.

We have two further reductions. First, as depicted in Figure \ref{figure_2_c_a-vertical}, we consider the case when $\la \geq x_1$ so that
\begin{equation}\label{distance_la_updated}
d_\la =\la - x_1 \geq 0.
\end{equation}
Note that the case when when $\la - x_1 < 0$ can be handled by reflecting $A$ and $\Ga$ about the $y$-axis. Second, by relabeling $\mathbf{a}$ and $\mathbf{b}$ if necessary, we may assume that when $\la> x_1$, it holds that
\begin{equation}\label{positivity_cond_phi} 
\Phi_{\lambda}(\mathbf{b}) - \Phi_{\lambda}(\mathbf{a})>0
 \end{equation}
as in Figure \ref{figure_2_c_a-vertical}. Finally, we will also have
\begin{equation}\label{positivity_cond_space}
(b_1-a_1)>0.
\end{equation}
This follows from the geometry of the curves: in order for \eqref{positivity_cond_phi} to hold in the intersecting case, the concavity of $\Gamma$ shows that $\mathbf{b}$ must lie below and to the right of $\mathbf{a}$. 

Using the concavity condition \eqref{concave}, we show that 
\begin{equation}\label{transv_goal_mini}
 \Phi_{\lambda}(\mathbf{b}) - \Phi_{\lambda}(\mathbf{a}) \sim (b_1-a_1)\cdot d_\la.
 \end{equation} Observe that by the bound on $\gamma'$ and the relationships established in \eqref{eq1} and \eqref{eq2},
\begin{equation}
|b_2-a_2| = |\ga(s_0) - \ga(t_0)| \le |s_0-t_0| = |b_1-a_1|.
\end{equation}
As such, proving \eqref{transv_goal_mini} will be sufficient to establish \eqref{transv_goal}. We now carry out the verification of \eqref{transv_goal_mini} in three cases based on the relative sizes of $d_{\lambda}$ and $|b_1 - a_1|$. We will handle the non-intersecting case (where \eqref{007} does not hold) separately.

%%%%%%%%%%%%%%%%%%%%%%%%%%%%%%%%%%%%%%%%%%%%%%%%%%
%CASE 1
%%%%%%%%%%%%%%%%%%%%%%%%%%%%%%%%%%%%%%%%%%%%%%%%%%
\noindent
\textbf{Case 1:} $(b_1 - a_1) < \frac{d_{\lambda}}{2}$. 
We begin by examining the simplest case, which motivates the finer analysis to come. This is depicted in the following figure:
\begin{figure}[ht]
  \centering
  \includegraphics[width=12cm]{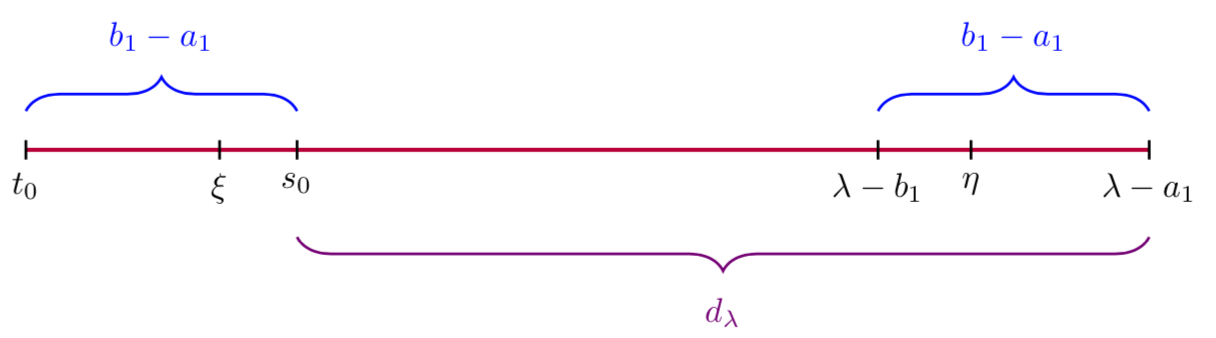}
  \caption{Case 1}
  \label{figure_case1}
\end{figure}

Using the definition of $\Phi_\la$ in \eqref{Phi}, the relationships established in \eqref{eq1} -- \eqref{eq2}, and the mean value theorem, we have
\begin{align*}
\Phi_{\lambda}(\mathbf{b}) - \Phi_{\lambda}(\mathbf{a}) 
&= \left( b_2 +  \gamma(\lambda-b_1)\right)  -    \left( a_2 +  \gamma(\lambda-a_1) \right)  \\
&=     \left(  b_2 -a_2\right)     + \left( \gamma(\lambda-b_1) - \gamma(\lambda-a_1)\right)   \\
&=      \left(  \gamma(s_0) - \gamma(t_0)\right)  + \left(  \gamma(\lambda-b_1) - \gamma(\lambda-a_1)\right)  \\
&=   \gamma'(\xi)(b_1 - a_1) -   \gamma'(\eta)(b_1 - a_1) \\
&= [\gamma'(\xi) - \gamma'(\eta)](b_1 - a_1),
\end{align*}
for some $\eta \in (\lambda- b_1,\lambda - a_1)$ and $\xi\in (t_0, s_0)$.

It follows by \eqref{concave} that
$$\Phi_{\lambda}(\mathbf{b}) - \Phi_{\lambda}(\mathbf{a})  \sim 
 (\eta - \xi) \cdot (b_1- a_1).$$
Since $(b_1-a_1)< d_\lambda/2$, we see that  \eqref{transv_goal_mini} is verified following the observation that
$$(\eta - \xi ) \sim d_\lambda.$$
To see this, recall from \eqref{distance_la_updated} and \eqref{eq1} that 
$d_\la = \la- a_1 - s_0 = \la - b_1 -t_0$. Following Figure \ref{figure_case1},
$$\eta - \xi > (\lambda - b_1) -s_0 = (\lambda- b_1 - t_0) - (s_0 - t_0) = d_\lambda - (b_1 - a_1),$$
and similarly  
$$\eta - \xi < (\lambda-a_1) -t_0 = (\lambda-a_1-s_0) + (s_0 -t_0) = d_\lambda + (b_1 - a_1).$$

%%%VERIFY  \eqref{transv_goal}: The FULL INTERSECTION CASE
Before moving to the general argument, we observe that the separation of $d_\la$ and $(b_1-a_1)$ was crucial in guaranteeing that the variables arising from the application of the mean value theorem, $\xi$ and $\eta$, were properly separated.  More generally, a finer analysis using telescoping sums is used to guarantee such separation.

%%%%%%%%%%%%%%%%%%%%%%%%%%%%%%%%%%%%%%%%%%%%%%%%%%
%CASE 2
%%%%%%%%%%%%%%%%%%%%%%%%%%%%%%%%%%%%%%%%%%%%%%%%%%
\noindent
\textbf{Case 2:} $\frac{d_{\lambda}}{2} \le (b_1 - a_1 )< d_{\lambda}$. 
Set 
\begin{equation}\label{p_k}
p = \frac{b_1- a_1}{2} , \,\ \text{ and } \,\, 
q= s_0.
\end{equation}

First, 
we take a moment to compare the variables under examination. 
Note $p>0$ by \eqref{positivity_cond_space}. 
Using \eqref{distance_la_updated} and \eqref{eq1}, we can write $d_\la = \la - b_1 - t_0$ and $b_1- a_1 = s_0 - t_0$. Therefore, when $b_1 - a_1 < d_{\lambda}$, then $s_0-t_0 < \la -b_1 - t_0$ and so
$s_0<\la - b_1.$  
This implies that
$$
t_0 < s_0 < \lambda - b_1 < \lambda -a_1,
$$
and so for $p$ and $q$ as in \eqref{p_k}, 
$$
t_0 = q-2p < q-p< q= s_0 < \lambda - b_1 = \la - a_1-2p < \la - a_1 - p < \lambda -a_1. 
$$
Appealing to \eqref{eq2}, we can write 
\begin{align*}
\Phi_{\lambda}(\mathbf{a}) - \Phi_{\lambda}(\mathbf{b}) 
&=\gamma(\lambda-a_1) -\gamma(\lambda-b_1)  -    \left( b_2 -a_2  \right)  \\
&=   \gamma(\lambda-a_1) - \gamma(\lambda-b_1)  -  \left( \gamma(s_0) - \gamma(t_0) \right)  \\
&= \sum_{j=0}^1 \left( \ga(\la - a_1-jp) -\ga(\la - a_1 - (j+1)p) \right) \\
&-  \sum_{j=0}^1 \left( \ga(q- jp) -\ga(q-(j+1)p) \right).  
\end{align*}
Applying the mean value theorem, there exists $h_0, h_1, h_0', h_1' \in (0,1)$ so that 
\begin{align*}
\Phi_{\lambda}(\mathbf{a}) - \Phi_{\lambda}(\mathbf{b}) 
&= \sum_{j=0}^1 \left( \ga'(\la - a_1-jp-h_jp)\cdot p  \right) \\
&-  \sum_{j=0}^1 \left( \ga'(q- jp-h_j' p ) \cdot p  \right),
\end{align*}
and it follows that 
\begin{equation}\label{main_case2}
\Phi_{\lambda}(\mathbf{a}) - \Phi_{\lambda}(\mathbf{b}) 
\sim \left( \sum_{j=0}^1 \left( \ga'(\la - a_1-jp-h_jp)
-  \ga'(q- jp-h_j' p ) \right) \right)  \cdot p.
\end{equation}

The purpose for adding and subtracting terms, is that the terms 
$\la - a_1-jp-h_jp$ and $q- jp-h_j' p$ are now appropriately separated for $j=0,1$.
Indeed, when $d_\lambda  >(b_1 - a_1)$,  recalling that $q=s_0$, 
it holds that 
$$(\la - a_1-jp-h_jp)-(s_0- jp-h_j' p)
=d_\la -h_jp + h_j'p
\geq d_\la/2, $$
and 
$$(\la - a_1-jp-h_jp)-(s_0- jp-h_j' p)
=d_\la -h_jp + h_j'p
\le 3d_\la/2. $$

The key point is that in \eqref{main_case2}, the arguments of $\gamma'$ within each summand are separated by a positive quantity comparable to $d_{\lambda}$. Using the bi-Lipschitz condition on $\gamma'$ (in which case $\gamma'$ is strictly monotonic on $I$), we conclude that
$$\Phi_{\lambda}(\mathbf{b}) - \Phi_{\lambda}(\mathbf{a}) \sim  d_\lambda \cdot p.$$ 
Since $p \sim (b_1 - a_1)$, this case is completed.

%%%%%%%%%%%%%%%%%%%%%%%%%%%%%%%%%%%%%%%%%%%%%%%%%%
%CASE 3
%%%%%%%%%%%%%%%%%%%%%%%%%%%%%%%%%%%%%%%%%%%%%%%%%%
\noindent
\textbf{Case 3:} $d_{\lambda} \le (b_1 - a_1)$. 
Set
\begin{equation}\label{pq}
p = \frac{ d_\lambda}{2} , \,\ \text{ and } \,\, 
q= (\la - b_1).
\end{equation}

With this choice of $p$ and $q$, the proof proceeds as in the  previous case. This situation is depicted below in Figure \ref{figure_case3}.

\begin{figure}[ht]
  \centering
  \includegraphics[width=12cm]{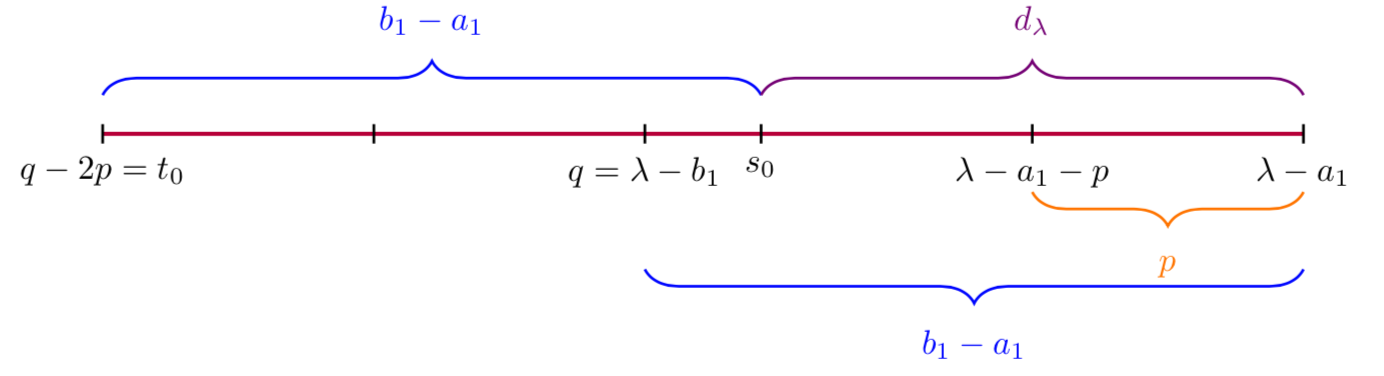}
  \caption{Case 3}
  \label{figure_case3}
\end{figure}

Using \eqref{eq1} and \eqref{distance_la_updated}, we can write  $d_\la = \la - b_1 - t_0 = \la - a_1 - s_0\geq 0$ and $b_1- a_1 = s_0 - t_0 > 0$. Therefore, when $d_\lambda \le b_1 - a_1$, then 
$ \la - b_1 - t_0 \le s_0-t_0$ and so
$\la - b_1 \le s_0$. Combining these observations, if $d_\la \le (b_1-a_1) $, then 
$$ t_0 \le  \lambda - b_1 \le s_0 \le   \lambda -a_1,
$$
and so, for $p$ and $q$ as in \eqref{pq}, 
$$t_0= q-2p \le  q-p \le q= \lambda - b_1 \le s_0 = \la - a_1 - 2p \le \la - a_1 - p \le  \lambda -a_1. 
$$

Using an identical telescoping argument as that used in the previous case to obtain \eqref{main_case2}, except now with $p$ and $q$ as in \eqref{pq}, we conclude that there exists $h_0, h_1, h_0', h_1' \in (0,1)$ so that
\begin{equation}\label{common2}
\Phi_{\lambda}(\mathbf{a}) - \Phi_{\lambda}(\mathbf{b}) 
\sim \left( \sum_{j=0}^1 \left( \ga'(\la - a_1-jp-h_jp)
-  \ga'(q- jp-h_j' p ) \right) \right)\cdot p.
\end{equation}

We now observe that 
$\la - a_1-jp-h_jp$ and $q- jp-h_j' p$ are sufficiently separated for $j=0,1$
when $d_\lambda \le b_1 - a_1$:
$$(\la - a_1-jp-h_jp)-(q- jp-h_j' p)
= b_1-a_1 -h_jp + h_j'p
\geq  (b_1-a_1)/2, $$
and 
$$(\la - a_1-jp-h_jp)-(q- jp-h_j' p)
= (b_1-a_1) -h_jp + h_j'p
\le 3(b_1-a_1)/2. $$

As in Case 2 above, we have now established the necessary separation between the arguments of $\gamma'$ in each summand; it follows that
$$\Phi_{\lambda}(\mathbf{b}) - \Phi_{\lambda}(\mathbf{b})\sim (b_1 -a_1)\cdot p.$$
Since $p \sim d_{\lambda}$, this case is finished.

%%%%%%%%%%%%%%%%%%%%%%%%%%%%%%%%%%%%%%%%%%%%%%%%%%%%%
%CURVES DON'T INTERSECT CASE
%%%%%%%%%%%%%%%%%%%%%%%%%%%%%%%%%%%%%%%%%%%%%%%%%%%%%
%
%

\noindent \textbf{Non-intersection case:} It remains to verify  \textbf{(H2)} of Corollary \ref{a23} when \eqref{007} does not hold. 
We verify the equivalent statement \eqref{1-trans}. 
Assume that $\mathbf{a}$ and $\mathbf{b}$ are such that 
\begin{equation}\label{emptycase}
  (\mathbf{a}+\Gamma)\cap(\mathbf{b}+\Gamma)= \emptyset.
\end{equation}
Let $\de>0$.  
For each $\la\in \widehat{I}$, set
\begin{align*}
h(\la) 
&:= \Phi_{\lambda}(\mathbf{b}) - \Phi_{\lambda}(\mathbf{a}) 
=\ga(\la-b_1) - \ga(\la-a_1) + (b_2-a_2). 
\end{align*}

Relabeling if necessary, we may assume that the graph $(\mathbf{b}+\Gamma)$ is above $(\mathbf{a}+\Gamma)$ in the sense that 
for each $\la\in \widehat{I}$, it holds that
$$
h(\la) >0.
$$

Observe that in the case that $a_1= b_1$, then 
$h(\la) = b_2-a_2$ is constant, and so the left-hand-side of \eqref{1-trans} is non-zero identically when 
$h(\la)= b_2-a_2 = |a-b| \le \de$, in which case the right-hand-side of \eqref{1-trans} is bounded below by the constant $c$, and  the inequality is satisfied provided that $c$ is chosen so that $c\geq |A|$.  

Assume then that $a_1 \neq b_1$.  
We will apply a vertical shift to the curve $(\mathbf{b}+ \Gamma)$ to reduce to the intersection case considered in \eqref{007} and handled above. 
It is a consequence of the assumption in Definition \ref{good_curve} that $\ga''\neq 0$ that there exists a unique $\widehat{\la}\in A$ where $h(\la)$ is minimized.  Set 
$$d:= h(\widehat{\la}).$$ (Indeed, when $a_1\neq b_1$, note that $h$ is strictly monotonic as $h'\neq 0$ since $\ga''\neq 0$.) 
Now
$$\left(  \Ga + (b_1, b_2-d) \right) \cap \left( \Ga + \mathbf{a}\right) \neq \emptyset,$$
and we see that 
\begin{equation}\label{shift}
\Phi_{\la}(\mathbf{b}) =b_2    + \gamma(\lambda -b_1)    = b_2 - d   + \gamma(\lambda -b_1)  + d 
=  \Phi_{\la}((b_1, b_2 - d )) + d.
\end{equation}

Set $ \mathbf{b}(d)=    (b_1, b_2 - d )$. 
Now, if $\la$ is such that $h(\la) =  \Phi_{\lambda}(\mathbf{b}) - \Phi_{\lambda}(\mathbf{a}) \le \de$, then 
$ \Phi_{\la}(   \mathbf{b}(d)  ) - \Phi_{\lambda}(\mathbf{a}) \le \de-d \le \de$.
Note we may assume that $\de\geq d$ since $h(\la) \geq d$ for each $\la\in \widehat{I}$.  
Therefore
\begin{equation}\label{containment}
\{ \la\in \widehat{I}:  \Phi_{\lambda}(\mathbf{b}) - \Phi_{\lambda}(\mathbf{a}) \le \de\}
\subset
\{ \la\in \widehat{I}:  \Phi_{\lambda}((b_1, b_2 - d )) - \Phi_{\lambda}(\mathbf{a}) \le \de\},
\end{equation}
and it follows from the the previous Cases 1-3 that there exists a constant $c>0$ that depends only on the constant $\La$ in \eqref{concave} so that 
\begin{equation}\label{bd}
\pazocal{L}_1\{ \la\in \widehat{I}:  \Phi_{\lambda}((b_1, b_2 - d )) - \Phi_{\lambda}(\mathbf{a}) \le \de\} \le \frac{c\,\de}{|\mathbf{b}(d)-\mathbf{a}|}.
\end{equation}

Combining \eqref{containment} and \eqref{bd}, we see that if $|\mathbf{b}(d)-\mathbf{a}|$ were bounded below by $|\mathbf{b}-\mathbf{a}|$, then the argument would be complete. Since this may not always be the case, we need a slightly more delicate analysis.  

We will now proceed in two cases, based on the relative sizes of $|b_1 - a_1|$ and $|b_2 - a_2|$. When the first difference is dominant, the shift between $b$ and $a$ is mostly horizontal and this horizontal translation is detected by the first coordinate of $\mathbf{b}(d)$. The more challenging case is when the translation is nearly vertical; this will follow the same lines as when $b_1 = a_1$. To be precise, we now consider the cases when 
$ |b_1-a_1| \geq \frac{1}{2} |b_2-a_2|$ and 
$ |b_1-a_1| < \frac{1}{2} |b_2-a_2|$ separately. 

In the former case, 
$$|b_1- a_1| \gtrsim |b-a|$$
and so 
\begin{align*}
|\mathbf{b}(d) - \mathbf{a}|^2
&= |b_1-a_1|^2  + |b_2-d-a_2|^2\\
&\geq  |b_1-a_1|^2\\
&\gtrsim |b-a|^2.
\end{align*}
In this case, we see that if $|\mathbf{b}(d)-\mathbf{a}|$ is bounded below by a constant multiple of $|\mathbf{b}-\mathbf{a}|$, and the argument is complete upon combining \eqref{containment} and \eqref{bd}.

Now consider the latter case that $ |b_1-a_1| < \frac{1}{2} |b_2-a_2|.$ Suppose that $\la$ is such that $h(\la) \le \de$.  
By the mean value theorem, there exists an $\eta$ so that 
$$ \ga(\la-b_1) - \ga(\la-a_1)=  -\ga'(\eta) (b_1-a_1).$$
Since $\Ga$ was assumed to be a good curve in the sense of Definition \ref{good_curve} and $\widehat{I}$ is a good interval (see the assumption at the top of Section \ref{checking_section}), 
we have
$|\ga'(t)| \le 1$. 
%Recall from \eqref{deriv_cond} that $\sup_{t\in I} |\ga'(t)| \le 1$.  
It follows from the reverse triangle inequality that
\begin{align*}
h(\la) &\geq |b_2-a_2| - |\ga'(\eta) (b_1-a_1)|\\
&\geq |b_2-a_2| - | b_1-a_1|\\
&\geq |b_2-a_2| -\frac{1}{2} |b_2-a_2|\\
&= \frac{1}{2} |b_2-a_2|\\
&\sim  |\mathbf{b}  -\mathbf{a} |,
\end{align*}
where the implicit constants are independent of $\mathbf{b}$, $\mathbf{a}$ and $\la$.  
It follows that there exists a $c'>0$ so that if  $\la$ is such that $h(\la) \le \de$, then 
$|\mathbf{b}  -\mathbf{a} | \le c'\de$ or $1 \le \frac{ c'\de  }{   |\mathbf{b}  -\mathbf{a} |}.$ Now, 
$$|\{\la\in \widehat{I}: h(\la) \le \de\} | \le |A| \le c \le c \frac{c' \, \de}{ |\mathbf{b}  -\mathbf{a} |},$$ provided $c$ is chosen so that $c\geq |A|$.   
\end{proof}

%%%%%%%%%%%%%%%%%%%%%
%PROOF THEOREM 2.2

\section{Proof of Theorem \ref{main_critical} }\label{g88}
We first prove Theorem \ref{main_critical} in the special case when the curvature does not vanish at any point. 
\subsection{The case when the curvature does not vanish}
The following is a restatement of Theorem \ref{main_critical} in the special case that the curvature of $\Ga$ does not vanish.  
\begin{theorem}[Simplified variant of Theorem \ref{main_critical}]\label{g98}
Let $\Gamma \subset \mathbb{R}^2$ be a simple $\ \mathcal{C}^2$ curve of positive and finite length such that the curvature of \  $\Gamma$ does not vanish at any point.
Let $A\subset \mathbb{R}^2$ be Borel measurable with $\dim_{\rm H} A=1$. Further we assume that $\mathcal{H}^1|_A$ is $\sigma$-finite.
Then,
$\pazocal{L}_2(A+\Gamma)=0$ if and only if for every
rectifiable curve $\gamma$, we have $\mathcal{H}^1(\gamma\cap A)=0$.
\end{theorem}

The proof of Theorem \ref{g98} follows from the transversality of the family $\left\{\Phi_\lambda\right\}$, which we already verified in Section \ref{checking_section}, coupled with the following theorem due to
R. Hovila, E. J{\"a}rvenp{\"a}{\"a}, M. J{\"a}rvenp{\"a}{\"a}, F. Ledrappier
 \cite[Theorem 1.2]{HovJ2Led}. 
\begin{theorem}[Hovila, J{\"a}rvenp{\"a}{\"a},  J{\"a}rvenp{\"a}{\"a}, Ledrappier]\label{006}
  Let $A \subset \mathbb{R}^n$ be $\mathcal{H}^m$-measurable with $\mathcal{H}^m(A)<\infty $. Assume that $\Lambda \subset \mathbb{R}^\ell $
  is open and $\left\{P_\lambda:\mathbb{R}^n\to\mathbb{R}^m\right\}_{\lambda\in\Lambda}$ is a transversal family of maps. Then $A$ is purely unrectifiable, if and only if $\mathcal{H}^m(P_\lambda(A))=0$
  for $\pazocal{L}_\ell $-almost all $\lambda\in\Lambda$.
\end{theorem}

We apply Theorem \ref{006} with $n=2$, $m=1$, $\ell =1$ and $\left\{P_\alpha\right\}=\left\{\Phi_\alpha\right\}$.
The proof of Theorem \ref{g98} is really quite simple from here; the basic idea is to use Theorem \ref{006} coupled with either and \eqref{Fubini_eq}  or \eqref{Fubini_extend_eq} to show $\pazocal{L}_2\left( A+ \Ga \right)>0$ when $A$ is regular 
and $\pazocal{L}_2\left( A+ \Ga \right)=0$ when $A$ is irregular.  We now give the details.  

\begin{proof}[Proof of Theorem \ref{g98} ]
It follows from the comments in Section \ref{r88} that, without loss of generality, we may assume (using the notation in that section) that
\begin{equation}\label{r84}
  \Gamma \mbox{ is a good curve, }\, \,  I= [0,L], \, \, \Ga \subset I^2 = [0,L]^2,  \, \, A \subset \Om:= \left[0, \frac{L}{2}\right]^2. 
\end{equation}
We first handle the case when $0<\mathcal{H}^1(A)<\infty$.
Then to verify Theorem \ref{g98}, it is enough to prove both of the following

\begin{equation}
   A \mbox{ is irregular} \Longrightarrow
 \pazocal{L}_2(A+\Gamma)=0,
\end{equation}
and 
\begin{equation}
A \mbox{ is regular } \Longrightarrow
 \pazocal{L}_2(A+\Gamma)>0.
\end{equation}

We appeal to Theorem \ref{006}, \eqref{Fubini_eq} and \eqref{Fubini_extend_eq} to complete the proof.  
Given $x\in \Om$, recall
that $\Phi_\la(x)$ is defined for each $\la\in [0,\frac{L}{2}]$ and that $\widetilde{\Phi}_\al$, the extension operator discussed in Section \ref{r88},  
is defined for each $\la\in [0,2L]$.
In the case that $A$ is irregular, then $\pazocal{L}_1(\widetilde{\Phi}_\lambda(\mathbf{A}))=0$ by Theorem \ref{006} for almost all $\lambda\in [0, 2L]$.  It follows by Fubini then that $\pazocal{L}_2\left(\bigcup_{\lambda} \widetilde{\Phi}_\lambda(A)\right)=0$, where the union is taken over $\lambda\in [0,2L]$.  We observe that $$A + \Gamma \subset \bigcup_{\lambda \in [0, 2L]} \widetilde{\Phi}_\lambda(A),$$ and so $\pazocal{L}_2(A+\Gamma)=0$, as in \eqref{Fubini_extend_eq}. 

Assume that $A$ is regular, then $\pazocal{L}_1( \Phi_\lambda(\mathbf{A}))>0$ by Theorem \ref{006} for almost all $\lambda\in \left[0, \frac{L}{2} \right]$, and by Fubini $\pazocal{L}_2(\bigcup_{\lambda} \Phi_\lambda(A) ) >0$.    We observe that $$\bigcup_{\lambda   \in \left[0, \frac{L}{2} \right]} \Phi_\lambda(A) \subset A +\Gamma,$$ and so $\pazocal{L}_2(A+\Gamma)>0$, as in \eqref{Fubini_eq}. 

This establishes Theorem \ref{g98} when $0<\mathcal{H}^1(A)<\infty$.  
When $\mathcal{H}^1(A)=0$, then $\mathcal{H}^1(A\cap \ga)=0$ for each rectifiable curve $\ga$, and the proof above applies to show that $\pazocal{L}_2(A+\Ga) = 0$.
The case when $\mathcal{H}^1(A)=\infty$ follows from the 
assumption that $\mathcal{H}^1|_A$ is $\sigma$-finite combined with the dominated convergence theorem. 
\end{proof}

The way we used that the curvature does not vanish at any point is as follows: To establish transversality, we needed to assume this condition in Corollary \ref{a23}. 
We remark that the definition of a transversal family in \cite{HovJ2Led} is slightly different from the one we gave in Corollary \ref{a23}, however it is easy to see that hypotheses \textbf{(H1)} and \textbf{(H2)} in Corollary \ref{a23}
imply that the conditions of the transversal family \cite[Definition 2.4]{HovJ2Led} hold.

\subsection{The general case}\label{general_case}
Now we prove Theorem \ref{main_critical} in the general case using Theorem \ref{g98} and the following Lemma.
\begin{lemma}\label{r99}
Let $\Ga= \{(t, \ga(t)) : t\in I\}$ be a good curve in the sense of Definition \ref{good_curve}. 
Let $Z \subset I$ be such that $\pazocal{L}_1(Z)=0$. We define
$$\Gamma_Z:=\ga(Z)=\left\{x\in\Gamma:\exists t\in Z, x=\ga(t)\right\}.$$
Assume that  $A \subset \mathbb{R}^2$ satisfying $\mathcal{H}^1(A)<\infty $. Then we have
\begin{equation}\label{r98}
  \pazocal{L}_2\left(A+\Gamma_Z\right)=0.
\end{equation}
\end{lemma}

With Lemma \ref{r99} in tow, we are ready to prove the general theorem.  The proof of Lemma \ref{r99} is delayed to the end of this section.

\begin{proof}[Completion of the Proof of Theorem \ref{main_critical}]
Assume that $A$ is an irregular $1$-set.
Let  $\Gamma \subset \mathbb{R}^2$ be a $\mathcal{C}^2$ curve such that the curvature of $\Gamma$ does not vanish except at a set of points having zero $\mathcal{H}^1$-measure. 
 Assume that $\mathbf{r}(t)$, $t\in I$ is the arc-length parametrization of $\Gamma$. Let $\chi(t)$
be the curvature of $\Gamma$ at the point  $\mathbf{r}(t)$. By assumption,  the set
$
Z:=\left\{t\in I:
\chi(t)=0\right\}
$
is such that
\begin{equation}\label{r82}
\pazocal{L}_1(Z)=0.
\end{equation}

 First assume that $Z$ is compact.
Then the complement of $Z$,  $G:=Z^c$ can be written as a countable disjoint union $G=\bigcup\limits_{n=1}^{\infty }G_n$ of open subintervals $G_n \subset \mathbb{R}$.  Then we represent every $G_n=\bigcup\limits_{m=1}^{\infty }I_{m}^{(n)}$ for closed intervals
$I_{m}^{(n)}$.  That is
\begin{equation}\label{r81}
  I=Z \, \bigcup \, \left( \bigcup\limits_{n,m=1}^{\infty }  \left(I \cap {I}_{m}^{(n)} \right) \right).
\end{equation}
Then for
$$
\Gamma_Z:=\mathbf{r}(Z) \mbox{ and }
\Gamma_{m}^{(n)}:=\mathbf{r}\left(I \cap I_{m}^{(n)}\right),
$$
 we have
 \begin{equation}\label{r80}
   \Gamma=\Gamma_Z \, \bigcup \, \left( \bigcup\limits_{n,m=1}^{\infty }\Gamma_{m}^{(n)}\right),
 \end{equation}
 with infinitely many $\Gamma_{m}^{(n)}\ne \emptyset $.
 If $A$ is a regular $1$-set then it follows from Theorem \ref{g98} that $\pazocal{L}_2(A+\Gamma) \geq \pazocal{L}_2 \left(A+\Gamma_{m}^{(n)} \right) >0$ for a $\Gamma_{m}^{(n)}\ne \emptyset $.

 Assume that $A$ is an irregular set.
 Then our goal is to prove that
 \begin{equation}\label{r79}
   \pazocal{L}_2(A+\Gamma) \leq
\pazocal{L}_2\left(A+\Gamma_Z\right)+   \sum\limits_{n,m=1}^{\infty }\pazocal{L}_2\left(A+\Gamma_{m}^{(n)}\right)=0.
 \end{equation}
 This is so, because
 \begin{itemize}
   \item $\pazocal{L}_2\left(A+ \Gamma_Z\right)=0$ by Lemma \ref{r99} and
   \item $\pazocal{L}_2\left(A+ \Gamma_{m}^{(n)}\right)=0$ by Theorem
   \ref{g98} for each $n,m$.
 \end{itemize}
 This completes the proof when $Z$ is compact. 
 In the case that $Z$ is not compact, for each $n\in \N$ we may choose an open set $O_n$ so that $Z\subset O_n$ and $\pazocal{L}_1(O_n)< \frac{1}{n}$. Since $I\cap \left(O_n\right)^c$ is compact, the previous case applies to show that, if $A$ is a $1$-set, then $\pazocal{L}_2(A+ \Ga_{I\backslash O_n} ) = 0$ if and only if $A$ is irregular, where $\Ga_{(\cdot)}  = \mathbf{r}(\cdot)$. 
   Noting that $A+ \Ga_{\left(I\backslash O_n\right)}$ increases to $A+ \Ga_{\left(I\backslash Z\right)}$ implies that 
 $\pazocal{L}_2(A+ \Ga_{\left( I\backslash Z \right)} ) = 0$.  Finally, Lemma \ref{r99} implies that $\pazocal{L}_2(A+ \Ga_{Z} ) = 0$. 
\end{proof}
%%%%
%REMAINS TO PROVE LEMMA 

\subsubsection{Establishing Lemma \ref{r99}}
It remains to establish Lemma \ref{r99}.
It follows from the inner regularity of the Lebesgue measure and the dominated convergence theorem that it suffices to establish Lemma \ref{r99} for compact sets $Z$.  
Assume then that $Z$ denotes a compact set and satisfies the hypotheses of the Lemma.  
In order to prove the lemma we need the following:

\begin{fact}[$\Gamma_Z$ can be covered by finitely many arcs of arbitrarily small length]\label{r97}
  For every $\varepsilon>0$ there exists a $\delta_0$ such that every $\delta\in(0,\delta_0)$ we can find a natural number $K$, and closed sub-arcs $W_1, \dots ,W_K \subset \Gamma$ such that
  \begin{equation}\label{r96}
    \Gamma_Z \subset \bigcup\limits_{i=1}^{K}W_i,
    \quad
      \mbox{ arclength }(W_i)\le \delta,
      \quad
          \sum\limits_{i=1}^{K}\mbox{ arclength }(W_i)<\frac{\varepsilon}{2}.
  \end{equation}
\end{fact}

\begin{proof}[Proof of Fact \ref{r97} ]
Fix an arbitrary $\varepsilon>0$ and, using that $Z$ is compact, choose a finite sequence of open intervals $\left\{U_i\right\}_{i=1}^{N}$ such that
 \begin{equation}\label{r95}
    \bigcup\limits_{i=1}^{N }U_i\supset Z
      \mbox{ and }
 \sum\limits_{i=1}^{N}|U_i|<\frac{\varepsilon}{3}.
 \end{equation}
 
 Let $\Gamma_i:=\ga(U_i)$. Then since $|\ga'|\le 1$:
 \begin{equation}\label{r93}
   \sum\limits_{i=1}^{N} \text{arclength} (\Gamma_i)<\frac{\varepsilon}{3}.
 \end{equation}
Define
 $$
 \delta_0:=\min\limits_{1 \leq i \leq N}\text{arclength}(\Gamma_i).
 $$
 Moreover we define the sub-arcs $ \Gamma'_i \subset \Gamma$ as follows: the left endpoints of $\Gamma_i$ and $\Gamma'_i$ are identical and we get $\Gamma'_i$ from $\Gamma_i$ by continuing $\Gamma_i$ in $\Gamma$ over its right-end point  as long as it is possible but $\text{arclength}(\Gamma'_i) \leq \text{arclength}(\Gamma_i)+\delta_0$. Then we fix an arbitrary $\delta\in\left(0,\delta_0\right)$ and define the sequence $\left\{\Gamma_{i,j}\right\}_{j=1}^{n_i}$ of sub-arcs of $\Gamma'_i$ such that the left end points of $\Gamma_i$ and $\Gamma_{i,1}$ are identical, the right end point of $\Gamma_i$ is contained in $\Gamma_{i, n_i}$ and
 \begin{equation}\label{r92}
 \text{arclength}(\Gamma_{i,j})\le \delta,\quad
   \text{int}\Gamma_{i,j_1}\cap\text{int}\Gamma_{i,j_2}= \emptyset \mbox{ for }
   j_1\ne j_2,\quad
   \Gamma_i \subset \bigcup\limits_{j=1}^{n_i}
   \Gamma_{i,j}.
 \end{equation}
Now, for $1\le i \le N$, 
 \begin{equation}\label{r91}
   \sum\limits_{j=1}^{n_i}
   \text{arclength}(\Gamma_{i,j}) \leq \delta_0+\text{arclength}(\Gamma_i) \leq
   2\text{arclength}(\Gamma_i).
 \end{equation}
It follows from the last two displayed formulas and from \eqref{r93} that
$K:=\sum\limits_{i=1}^{N}n_i$ and $\left\{W_1, \dots ,W_K\right\}:=\bigcup\limits_{i=1}^{N}\left\{\Gamma_{i,1}, \dots ,\Gamma_{i,n_i}\right\}$  satisfy \eqref{r96}.
\end{proof}

\begin{proof}[Proof of Lemma \ref{r99}]
It is enough to prove that
\begin{equation}\label{r89}
   \mathcal{H}^2\left(A\times \Gamma_Z\right)=0
\end{equation}
since $A+\Gamma_Z$ is a Lipschitz image of $A\times \Gamma_Z$.
  Fix an arbitrary $\varepsilon>0$. We choose $\delta_0$ as in Fact \ref{r97} for this $\varepsilon$. Fix an arbitrary $\rho<\delta_0$ and let $\left\{V_i\right\}_{j=1}^{\infty }$ be a sequence of open disks on $\mathbb{R}^2$ with
  \begin{equation}\label{r90}
    A \subset \bigcup\limits_{j=1}^{\infty }V_j,\
    |V_j|<\rho\mbox{ and }\
    \sum\limits_{j=1}^{\infty }|V_j|< \mathcal{H}^1(A)+1<\infty ,
  \end{equation}
  where $| \cdot |$ denotes the diameter.
  For every $j$ let $\delta_j:=|V_j|$ and we choose sub-arcs of $\Gamma$,  $\left\{W_{1}^{j}, \dots ,W_{K_j}^{j}\right\}$ as in Fact \ref{r97} for $\delta_j$ in place of $\delta$ (so that $ \mbox{arclength}(W_i^j)\le \delta_j= |V_j| < \rho < \delta_0$).
  Then
  $$
 \left\{ \left\{V_j\times W_{i}^{j} \right\}_{i=1}^{K_j} \right\}_{j=1}^{\infty }
  $$
  is a $\sqrt{2} \cdot \rho$-cover of $A\times \Gamma_Z$ and
  $$
  \mathcal{H}_{\rho}^{2}(A\times \Gamma_Z)
   \leq
   \sum\limits_{j=1}^{\infty }
   \sum\limits_{i=1}^{K_j}|V_j\times W_{i}^{j}|^2
    \leq \sum\limits_{j=1}^{\infty }\delta_j\underbrace{\sum\limits_{i=1}^{K_j}\delta_j}_{<\frac{\varepsilon}{2}} <
    \frac{\varepsilon}{2}\sum\limits_{j=1}^{\infty }|V_j|<\frac{\varepsilon}{2} \left(\mathcal{H}^1(A)+1 \right).
  $$
  This implies that \eqref{r89} holds.
\end{proof}

%%%%%%%%%%%%%%
\section{Proof of Theorem \ref{084}  }\label{P98}

%%%%%%%%%%%%%%%%
%%%%%%%%%%%%%%%%%

\begin{proof}[Proof of Theorem \ref{084}]
For $N\in \N$ even, 
define the one-parameter self-similar iterated function system (IFS)
\begin{equation}\label{081}
   \mathcal{S}_\lambda:=\left\{S_{k,j,\lambda}(x):=
   \lambda x+t_{k,j}
   \right\}_{j=1,2,k=1, \dots ,N/2},\qquad
\end{equation}
where
$$
 \lambda\in U \subset \left(0,\frac{1}{N-1}\right) \qquad \text{and} \qquad 
t_{k,j}^\theta:=\mathrm{proj}_\theta(t_{k,j})\in \ell _{\theta^\bot}.
$$
For every $i=1, \dots ,n$,
the  $\mathrm{proj}_{\alpha_i}$ projection  of $\mathcal{S}_\lambda$
to the line $\ell _{\alpha_i}$ is  self-similar IFS
$$
 \mathcal{S}^{\alpha_i}_\lambda:=\left\{S^{\alpha_i}_{k,j,\lambda}(x):=
   \lambda x+t^{\alpha_i}_{k,j}
   \right\}_{j=1,2,k=1, \dots ,N/2},\qquad \lambda\in U.
$$
We write $A_\lambda$ for the attractor of the original system given in \eqref{081} and
$A_{\lambda}^{\alpha_i}$ is the attractor of the projected system. Since the linear parts of the mappings  of the original system are diagonal
(they are homothopies) 
we therefore have
\begin{equation}\label{078}
A_{\lambda}^{\alpha_i}= \mathrm{proj}_{\alpha_i}(A_\lambda).
\end{equation}
We choose the translation parameters $t_{k,j}$ such that
\begin{itemize}
  \item the Strong Separation Condition (SSC)
holds. That is for $(k_1,j_1)\ne(k_2,j_2)$ we have
$$
S_{k_1,j_1,\lambda}(A_\lambda)\cap S_{k_2,j_2,\lambda}(A_\lambda)=\emptyset.
$$
  \item $
 t^{\alpha_i}_{i,1} =t^{\alpha_i}_{i,2}
$ holds for all $i=1, \dots ,n$.
\end{itemize}
Then by the well known Hutchinson Theorem (see \cite[Section 8.3]{Falc86}) we have
\begin{equation}\label{g82}
  \dim_{\rm H}(A_\lambda)= s(\lambda):=\frac{\log N}{-\log \lambda}.
\end{equation}
Observe that for every $i=1, \dots ,n$ the projected IFS $\mathcal{S}^{\alpha_i}_\lambda$ consist of only at most  $N-1$ \underline{different} similarities with ratio  $\lambda<1/(N-1)$. This is so because
$
 t^{\alpha_i}_{i,1} =t^{\alpha_i}_{i,2}
$
and in this way
\begin{equation}\label{070}
  S_{i,1}^{\alpha_i}\equiv S_{i,2}^{\alpha_i}
\end{equation}
holds for all $i$.

The \textbf{proof of part (a')}
Choose an arbitrary $\lambda_1\in\left(\frac{1}{N},\frac{1}{N-1}\right)$.

Then by \eqref{070} and by  the choice of $\lambda_1$ for all $i=1, \dots ,n$ we have
\begin{equation}\label{075}
  \dim_{\rm H} A_{\lambda_1}>1 \mbox{ but }
  \dim_{\rm H} (A_{\lambda}^{\alpha_i})<1
\end{equation}
consequently,
\begin{equation}\label{071}
  \pazocal{L}_1\left(A_{\lambda}^{\alpha_i}\right)=0.
\end{equation}
Then by \eqref{078} and Part (b) of Lemma \ref{g94} we obtain that $\pazocal{L}_1(A+\Gamma)=0$.

 \medskip

The \textbf{proof of part (b')} of Theorem \ref{084} is analogous.  Let $\lambda_2\in\left(0,\frac{1}{N}\right)$.
 Then by \eqref{070}
 \begin{equation}\label{069}
   \dim_{\rm H} \left(A_{\lambda_2}^{\alpha_i}\right)
    \leq \frac{\log(N-1)}{-\log \lambda}<
    \frac{\log N}{-\log \lambda}
    =\dim_{\rm H} (A_{\lambda_2}).
 \end{equation}

\begin{equation}\label{068}
  A_{\lambda_2}+I_i\subset
  u_i \cdot \mathbf{e}_{\alpha_{i}^{\bot}}
  +
   A_{\lambda_2}^{\alpha_i}
   +
\ell_{\alpha_{i}^{\bot}}
\end{equation}
Hence,
$
\dim_{\rm H} \left(A_{\lambda_2}+I_i\right)
=\dim_{\rm H} \left( A_{\lambda_2}^{\alpha_i}\right)+1
<
\dim_{\rm H} (A_{\lambda_2})+1.
$
Thus,
$$
\dim_{\rm H} \left(A_{\lambda_2}+\Gamma\right)
=\max_{i=1, \dots ,n}\dim_{\rm H} \left(A_{\lambda_2}+I_i\right)
<
\dim_{\rm H} (A_{\lambda_2})+1.
$$
This completes the proof  of the second part of Theorem \ref{084}.

The \textbf{proof of part (c')} is easy. Namely, let
both $A$ and $B$ be subsets of the $x$-axis.
Let
 $A\subset[0,1]$ be a set of
$\pazocal{L}_1(A)=0$ but $\dim_{\rm H} A=1$. And let $B:=[0,1]$.
\end{proof}

\begin{acknowledgment}
We would also like to thank Alex Iosevich and Boris Solomyak for their helpful comments in improving this manuscript.  
\end{acknowledgment}

%%%%%%%%%%%%%%%%%%%%%%%%%%%%%%%%%%%%%%%%%%

\end{document}